\def\dep{D(h_\ep(t),\ep)}
\def\depf{\overline{D}(h_\ep(t),\ep)}
\def\psiep{\psi_\ep}
\def\R{\mathbb{R}}
\def\al{\alpha}
\DeclareMathOperator\dive{div}
\DeclareMathOperator\curl{curl}
\def\omep{\curl\vep}
\newcommand\nl[2]{\|#2\|_{L^{#1}}}
\newcommand\nll[2]{\bigl\|#2\bigr\|_{L^{#1}}}
\newcommand\nh[2]{\|#2\|_{H^{#1}}}
\newcommand\lip[1]{\|#1\|_{W^{1,\infty}}}
\def\vep{v_\ep}
\def\vepk{v_{\ep_k}}
\def\etaep{\eta_\ep}
\def\fep{f_\ep}
\def\fept{\widetilde{f}_\ep}
\def\ale{\al_\ep}
\def\alek{\al_{\ep_k}}
\def\toep{\stackrel{\ep\to0}{\longrightarrow}}
\def\toepk{\stackrel{\ep_k\to0}{\longrightarrow}}
\def\phiep{\varphi_\ep}
\def\phiepk{\varphi_{\ep_k}}
\def\xiep{\Xi_\ep}
\def\xiepk{\Xi_{\ep_k}}
\def\Lip{W^{1,\infty}}
\def\loc{_{\text{loc}}}
\newcommand\Om{\Omega}
\def\slr{\Omega_\ep}
\def\flr{\RR^2\setminus\Omega_\ep}
\newcommand{\RR}{\mathbb R}
\newcommand{\pat}{\partial_t}
\newcommand{\na}{\nabla}
\newcommand{\ue}{u_{\varepsilon}}
\newcommand{\ep}{\varepsilon}
\newcommand{\wep}{\widetilde{u}_{\ep}}
\newcommand{\norm}[1]{\lVert#1\rVert}
\newcommand{\normm}[1]{\bigl\lVert#1\bigr\rVert}
\newcommand{\vertiii}[1]{{\left\vert\kern-0.25ex\left\vert\kern-0.25ex\left\vert #1 
    \right\vert\kern-0.25ex\right\vert\kern-0.25ex\right\vert}}
\newcounter{comentcount}
\newcounter{teocount}
\newtheorem{lem}{Lemma}
\newtheorem{prop}{Proposition}
\newtheorem{teo}[teocount]{Theorem}
\title[]{A small solid body with large density in a planar fluid is negligible}
\author{Jiao He}
\author{Dragoș Iftimie}
\begin{document}

\begin{abstract}
In this article, we consider a small rigid body moving in a viscous fluid filling the whole $\RR^2$. We assume that the diameter of the rigid body goes to 0, that the initial velocity has bounded energy and that the density of the rigid body goes to infinity. We prove that the rigid body has no influence on the limit equation by showing convergence of the solutions towards a solution  of the Navier-Stokes equations in the full plane $\mathbb{R}^{2}$.  
\end{abstract}

\maketitle

\section{Introduction}
In this paper, we consider a fluid-solid system consisting in a small smooth rigid body $\Om_\ep$ of size $\ep$ evolving in a viscous fluid filling the whole of $\RR^2$. Our aim is to determine the limit of this coupled system when the size of the rigid body $\ep$ goes to 0.

Let us describe now the fluid solid system of equations. To do that, we need to introduce some notation. We denote by $u_\ep$, respectively $p_\ep$, the velocity, respectively the pressure, of the fluid; they are defined on $\flr$, the exterior of the smooth rigid body $\slr$. The evolution of the rigid body $\slr(t)$ is described by $h_\ep$, the position of its center of mass, and by $\theta_\ep$, the angle of rotation of the rigid body compared with the initial position. We have that
\begin{equation*}
  \slr(t)=h_\ep(t)+ \begin{pmatrix}
  \cos \theta_{\ep}(t) & -\sin \theta_{\ep}(t)\\
  \sin \theta_{\ep}(t) & \cos  \theta_{\ep}(t)
  \end{pmatrix}
\bigl(\slr(0)-h_\ep(0)\bigr).
\end{equation*}

The velocity of the fluid verifies the incompressible Navier-Stokes equations in the exterior of the rigid body:
\begin{equation}{\label{ns-ob1}}
\frac{\partial u_{\ep}}{\partial t} +u_{\ep} \cdot \nabla u_{\ep}- \nu \Delta u_{\ep} + \nabla p_{\ep} =0,\quad \dive u_\ep=0\quad \text{for}\; t>0\text{ and }x \in \flr(t).
\end{equation}
On the boundary of the rigid body we assume no-slip boundary conditions:
\begin{equation}
 u_{\ep}(t,x)= h_{\ep}'(t)+ \theta_{\ep}'(t)(x-h_{\ep}(t))^{\bot} \; \text{for}\; t >0 \text{ and }x \in \partial\slr(t).
\end{equation}
Moreover, the velocity is assumed to vanish at infinity:
\begin{equation}
\lim_{|x|\rightarrow \infty} u_{\ep}(t,x) =0 \; \text{for}\; t \geq0.
\end{equation}

Now we write down the equations of motion of the solid body. Let us denote by $m_\ep$  the mass of the solid and by $J_\ep$ the momentum of inertia of the solid. We also denote by $\sigma(u_\ep,p_\ep) $  the stress tensor of the fluid:
\begin{equation*}
\sigma(u_\ep,p_\ep) =2\nu D(u_\ep)-p_\ep I_2  
\end{equation*}
where $I_2$ is the identity matrix and $D(u_\ep)$ is the deformation tensor
\begin{equation*}
D(u_\ep)=\frac{1}{2}\bigl(\frac{\partial u_{\ep,i}}{\partial x_{j}}+ \frac{\partial u_{\ep,j}}{\partial x_{i}} \bigr)_{i,j}.  
\end{equation*}

Then the solid body $\slr(t)$ evolves according to Newton's balance law for linear and angular momenta:
\begin{equation}
m_{\ep} h''_{\ep}(t) = - \int_{\partial \slr(t)} \sigma(u_\ep,p_\ep) n_\ep  \;\;\; \text{for}\; t >0,
\end{equation}
and
\begin{equation}{\label{solid2}}
J_{\ep}\theta''_{\ep}(t) = - \int_{\partial \slr(t)} (\sigma(u_\ep,p_\ep) n_\ep)\cdot (x-h_{\ep})^{\bot}  \; \text{for}\; t >0.
\end{equation}

Above $n_\ep$ denotes the unit normal to $\partial \slr$ which points to the interior of the rigid body $\slr$, the orthogonal $x^\perp$ is defined by $x^\perp=(-x_2,x_1)$ and $\sigma(u_\ep,p_\ep) n_\ep$ denotes the matrix $\sigma(u_\ep,p_\ep)$ applied to the vector $n_\ep$.

One can obtain energy estimates for this system of equations. If we formally multiply the equation of $u_\ep$ by $u_\ep$, do some integrations by parts using also the equations of motion of the rigid body, we get the following energy estimate:
\begin{multline}\label{sfenerg}
\|u_\ep(t)\|_{L^2(\flr)}^2 + m_\ep |h'_\ep(t)|^2 + J_\ep |\theta'_\ep(t)|^2 + 4\nu\int_0^t \|D(u_\ep)\|_{L^2(\flr)}^2 \\
\leq \|u_\ep(0)\|_{L^2(\flr)}^2 + m_\ep |h'_\ep(0)|^2 + J_\ep |\theta'_\ep(0)|^2.
\end{multline}

To solve the system of equations \eqref{ns-ob1}--\eqref{solid2}, we need to impose the initial data. For the fluid part of the system we need to impose the initial velocity $\ue(0,x)$. The two equations describing the evolution of the rigid body are second-order in time, so we need to know $h_\ep(0)$, $h'_\ep(0)$, $\theta_\ep(0)$ and $\theta'_\ep(0)$. The system of equations being  translation invariant, we can assume without loss of generality that the initial position of the center of mass of the rigid body is in the origin: $h_\ep(0)=0$. Moreover, from the definition of the angle of rotation $\theta_\ep$ we obviously have that $\theta_\ep(0)=0$. So we only need to impose $\ue(0,x)$, $h'_\ep(0)$ and $\theta'_\ep(0)$. The initial velocity will be assumed to be square integrable only. As such, its trace on the boundary is not well-defined. Only its normal trace is defined thanks to the divergence free condition. Therefore, we need to impose the following compatibility condition on the initial velocity: 
\begin{equation}\label{compat}
 \ue(0,x)\cdot n_\ep= \bigl[h'_{\ep}(0)+\theta'_{\ep}(0)\big(x-h_{\ep}(0)\big)^{\perp}\bigr]\cdot n_\ep\quad\text{on }\partial\slr(0). 
\end{equation}
In conclusion, to solve the system of equations \eqref{ns-ob1}--\eqref{solid2}, we need to impose  that $\ue(0,x)\in L^2(\R^2\setminus\slr(0))$, that $\dive \ue(0,x) =0$ in $\R^2\setminus\slr(0)$ and the compatibility condition \eqref{compat}. There is no condition required on  $h'_\ep(0)$ and $\theta'_\ep(0)$ while $h_\ep(0)=0$ and $\theta_\ep(0)=0$.

To state the classical result of existence and uniqueness of solutions of \eqref{ns-ob1}--\eqref{solid2}, it is practical to extend  the velocity field $\ue$ inside the rigid body as follows:
\begin{equation}\label{extension}
\wep(t,x)=
\begin{cases}
\ue(t,x)&\quad\text{if }x\in\R^2\setminus\slr(t)\\
h'_{\ep}(t)+\theta'_{\ep}(t)(x-h_{\ep}(t) )^{\perp} &\quad\text{if }x\in\slr(t). 
\end{cases}
\end{equation}

The conditions imposed on the initial data ensure that $\wep(0,x)$ belongs to $L^2(\R^2)$ and is divergence free in $\R^2$. 

Let us denote by $\rho_\ep$ the density of the rigid body $\slr$. We extend $\rho_\ep$ in the fluid region $\R^2\setminus\slr$ by giving it value 1:
\begin{equation*}
 \widetilde{\rho}_{\ep}(t,x) =
   \begin{cases}
   1,& x\in \R^2\setminus\slr(t) \\
    \rho_{\ep}, & x\in \slr(t).
   \end{cases}
\end{equation*}

Due to the energy estimates \eqref{sfenerg}, global existence of finite energy solutions of  \eqref{ns-ob1}--\eqref{solid2} have been proved in a variety of settings. The literature is vast, we give here just a few references dealing with the dimension two: in \cite{desjardins_existence_1999}, \cite{hoffmann_motion_1999} and \cite{san_martin_global_2002} the authors consider the case of one or several rigid bodies moving in a bounded domain filled with a viscous fluid while in \cite{takahashi_global_2004} the authors consider a single disk moving in a fluid filling the whole plane. The existence for the problem we are considering here was not explicitly studied in these works (because we do not assume the rigid body to be a disk), but more complicated cases have been considered in the literature: the case of a 2D bounded domain where collisions with the boundary must be taken into account (see \cite{desjardins_existence_1999}, \cite{hoffmann_motion_1999} and \cite{san_martin_global_2002}) and the case of $\R^3$ with a rigid body of arbitrary shape (see for example \cite{yudakov_solvability_1974} and \cite{serre_chute_1987}). From these results we can extract the following statement about the existence and uniqueness of solutions of  \eqref{ns-ob1}--\eqref{solid2}.  We use the notation $\R_+=[0,\infty)$ and emphasize that the endpoint 0 belongs to $\R_+$. This is important when we write local spaces in $\R_+$ like for instance $L^2\loc(\R_+)=\{f\ ; \ f\text{ square integrable on any interval }[0,t]\}$. We will give a formulation of the PDE in terms of the extended velocity $\wep$.
\begin{teo}\label{DesEst}
Let $\ue(0,x)\in L^2(\flr(0))$ be divergence free and verifying the compatibility condition \eqref{compat}. We assume that $h_\ep(0)=0$ and $\theta_\ep(0)=0$ and we extend $\ue(0,x)$ to $\wep(0,x)$ as in \eqref{extension}. Then $\wep(0,x)$ is divergence free and square integrable on $\R^2$ and there exists a unique global weak solution $(\ue, h_{\ep}, \theta_{\ep})$ of \eqref{ns-ob1}--\eqref{solid2} in the following sense:
\begin{itemize}
\item $\ue, h_\ep, \theta_\ep$ satisfy
\begin{equation*}
\ue\in L^{\infty}(\R_+; L^{2}(\flr)) \cap L^{2}\loc(\R_+; H^{1}(\flr)),
\end{equation*}
\begin{equation*}
h_\ep \in W^{1,\infty}(\R_+; \R^2),\quad \theta_\ep \in W^{1,\infty} (\R_+; \R);
\end{equation*}
\item if we define $\wep$ as in \eqref{extension} then  $\wep$ is divergence free with $D\wep(t,x)=0$ in $\slr(t)$ and the equations of motion are verified in the sense of distributions under the following form
\begin{equation*}
-\int_{0}^{\infty} \int_{\RR^{2}} \widetilde{\rho}_{\ep} \wep \cdot \big(\pat \varphi_{\ep} + (\wep \cdot \na) \varphi_{\ep} \big) + 2 \nu \int_{0}^{\infty} \int_{\RR^{2}} D (\wep) : D (\varphi_{\ep}) 
 = \int_{\RR^{2}} \widetilde{\rho}_{\ep}(0) \wep(0) \cdot \varphi_{\ep}(0). 
\end{equation*}
for any divergence free test function $\varphi_{\ep} \in H^1(\R_+\times\R^2)$ compactly supported in time and such that $D\varphi_\ep(t,x)=0$ in $\slr(t)$; 
\end{itemize}
Moreover, $\wep$ satisfies the following energy inequality:
\begin{equation}{\label{energyineq}}
\int_{\RR^{2}} \widetilde{\rho}_{\ep} |\wep|^{2}  + 4 \nu \int_{0}^{t} \int_{\RR^{2}} |D (\wep)|^{2} \leq \int_{\RR^{2}} \widetilde{\rho}_{\ep}(0) |\wep(0)|^{2} \;\;\;\; \forall t>0.
\end{equation}
\end{teo}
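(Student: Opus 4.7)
The theorem is essentially known; my plan is to piece it together from the cited references and explain the one or two adaptations that are not completely immediate.

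\textbf{Existence by exhaustion of $\R^2$ by bounded domains.} The natural approach is to approximate the exterior problem by problems set in a large ball $B_R$. For each $R$ such that $\slr(0)\subset B_R$, the literature on 2D bounded domains (\cite{desjardins_existence_1999,hoffmann_motion_1999,san_martin_global_2002}) provides a global finite-energy weak solution $(u_\ep^R,h_\ep^R,\theta_\ep^R)$ in $B_R\setminus\slr^R(t)$ with no-slip boundary conditions on $\partial B_R$ and on $\partial\slr^R(t)$, in the extended-velocity formulation with density $\widetilde\rho_\ep$. These references accommodate a rigid body of general smooth shape and include possible collisions with the exterior boundary, so no geometric obstruction arises. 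Extending $\widetilde u_\ep^R$ by zero outside $B_R$ and using the energy inequality \eqref{energyineq}, which is uniform in $R$, one obtains the bounds
\begin{equation*}
\widetilde u_\ep^R\text{ bounded in }L^\infty(\R_+;L^2(\R^2))\cap L^2\loc(\R_+;\dot H^1(\R^2)),\qquad (h_\ep^R)',\,(\theta_\ep^R)'\text{ bounded in }L^\infty(\R_+).
\end{equation*}
From Ascoli one extracts locally uniform convergence of $h_\ep^R$ and $\theta_\ep^R$; from Banach--Alaoglu weak-$*$ and weak convergence of $\widetilde u_\ep^R$; and a standard Aubin--Lions argument (using the PDE to control $\pat(\widetilde\rho_\ep^R \widetilde u_\ep^R)$ in a negative Sobolev norm against test fields that are rigid in $\slr^R(t)$) gives strong $L^2\loc$ compactness away from the solid. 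Since the limit body $\slr(t)$ is smooth and transported by a Lipschitz motion, one can pass to the limit in the weak formulation in exactly the same way as in the bounded-domain proofs. The condition $D\widetilde u_\ep(t,\cdot)=0$ in $\slr(t)$ is preserved in the limit because it can be characterized by orthogonality against a suitable class of smooth test fields. This yields a global weak solution on $\flr(t)$ with the energy inequality.

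\textbf{Uniqueness.} This is the step where I expect the only real work. In 2D, for a fixed obstacle, uniqueness of Leray weak solutions is classical. Here the obstacle moves, and the two candidate solutions $(\widetilde u_\ep^1,h_\ep^1,\theta_\ep^1)$ and $(\widetilde u_\ep^2,h_\ep^2,\theta_\ep^2)$ may occupy slightly different regions, so one cannot directly subtract them. The standard remedy, used in 2D (see e.g.\ \cite{san_martin_global_2002,takahashi_global_2004}), is to change variables via a smooth, compactly supported diffeomorphism of $\R^2$ that maps $\slr^1(t)$ onto $\slr^2(t)$ and is the identity far from the solid. After this change of variables the two velocities live on the same configuration, the modified system is a perturbation of Navier--Stokes with lower-order terms that are controlled by $|h_\ep^1-h_\ep^2|+|\theta_\ep^1-\theta_\ep^2|$ and their derivatives, and one closes a Gr\"onwall estimate for
\begin{equation*}
\norm{\widetilde u_\ep^1-\widetilde u_\ep^2}_{L^2(\R^2)}^2+|h_\ep^1-h_\ep^2|^2+|\theta_\ep^1-\theta_\ep^2|^2
\end{equation*}
using the 2D Ladyzhenskaya inequality to absorb the quadratic term. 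The construction of the diffeomorphism is local in space so the unboundedness of $\R^2$ causes no additional difficulty; nothing depends on the shape being a disk.

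\textbf{Main difficulty.} The genuinely nontrivial point is the combination ``general shape $+$ unbounded domain''. Existence alone is routine from the cited works. Uniqueness, by contrast, requires reconciling the moving-domain framework of \cite{san_martin_global_2002} (written in bounded domains) with the exterior setting of \cite{takahashi_global_2004} (written for a disk, where rigid-body test fields can be chosen with an explicit radial cutoff). Adapting the change-of-variables uniqueness argument to our setting is the step one needs to verify carefully; all other ingredients are available in the literature.
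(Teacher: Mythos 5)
The paper does not actually prove Theorem~\ref{DesEst}: it states it as a result that ``can be extracted'' from the cited literature on bounded domains (\cite{desjardins_existence_1999,hoffmann_motion_1999,san_martin_global_2002}), on the disk in the whole plane (\cite{takahashi_global_2004}), and on general shapes in $\R^3$ (\cite{yudakov_solvability_1974,serre_chute_1987}), and offers no argument of its own. So there is nothing in the paper to match your plan against line by line; what can be said is that your reconstruction (exhaustion of $\R^2$ by balls $B_R$, uniform energy bounds, Ascoli for $h_\ep^R,\theta_\ep^R$, Aubin--Lions away from the solid, passage to the limit in the weak formulation with rigid test fields) is exactly the kind of assembly the authors implicitly rely on, and it is sound. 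One small point worth making explicit in the existence step: on a fixed time interval the solid cannot reach $\partial B_R$ for $R$ large, because the energy inequality bounds $\sqrt{m_\ep}\,|h_\ep^{R}{}'|$ uniformly, so the collision machinery of the bounded-domain references is not needed in the limit.

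The place where your plan is thinner than it looks is uniqueness. Your change-of-variables plus Gr\"onwall scheme is the right template, but for \emph{weak} (finite-energy) solutions of the coupled system it is not routine: closing the Gr\"onwall estimate requires using one of the two candidate solutions as a test function for the other, which in turn requires an energy equality (or a Serrin-type duality argument) for the transported system, and the change of variables following a merely $W^{1,\infty}$ rigid motion introduces commutator terms whose control is delicate at the weak-solution level. The references the paper cites give uniqueness only for strong solutions or for a disk; uniqueness of weak solutions for a body of general shape in 2D was settled later (by Glass and Sueur, in bounded domains), and is not covered by anything in the paper's bibliography. So you are right that uniqueness is ``where the only real work is'' --- but it is more work than your sketch acknowledges, and it is also the point at which the paper's own citation trail is weakest.
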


As mentioned before, we are interested in describing the asymptotic behavior of this fluid-solid system when the diameter of the rigid body $\slr$ goes to 0. There are several papers dealing with this issue when the rigid body does not move with the fluid. 
Iftimie, Lopes Filho and Nussenzveig Lopes \cite{iftimie_two-dimensional_2006} have treated the asymptotic behavior of viscous incompressible 2D flow in the exterior of a small fixed rigid body as the size of the rigid body becomes very small, see also \cite{chipot_limits_2014} for the case of the periodic boundary conditions. Moreover, Lacave \cite{lacave_two-dimensional_2009} considered a two-dimensional viscous fluid in the exterior of a thin fixed rigid body shrinking to a curve and proved convergence to a solution of the Navier-Stokes equations in the exterior of a curve. 

Although we are dealing here only with viscous fluids, let us mention that the case of a perfect incompressible fluid governed by the Euler equations also makes sense and the literature is richer. Let us mention a few results. Iftimie, Lopes Filho and Nussenzveig Lopes \cite{iftimie_two_2003} have studied the asymptotic behavior of incompressible, ideal two-dimensional flow in the exterior of a small fixed rigid body when the size of the rigid body becomes very small. Recently, Glass, Lacave and Sueur \cite{glass_motion_2014-1} have studied the case when the solid body shrinks to a point with fixed mass and circulation and is moving with the fluid. The same three authors  also consider in \cite{glass_motion_2016} the case when the body shrinks to a massless pointwise particle with fixed circulation. In that case, the fluid-solid system converges to the vortex-wave system. In addition, Glass, Munnier and Sueur \cite{glass_point_2018} considered the case of a bounded domain. 

As far as we know, there is only one result dealing with the case of a small rigid body moving in a viscous fluid in dimension two. More precisely, Lacave and Takahashi \cite{lacave_small_2017} considered a small moving disk in a two-dimensional viscous incompressible fluid. They used a fixed-point type argument based on previously known $L^{p}-L^{q}$ decay estimates of the linear semigroup associated to the fluid-solid system (see \cite{ervedoza_long-time_2014}). They proved convergence towards the solution of the Navier-Stokes equations in $\RR^{2}$ under the assumption that the rigid body is a disk of radius $\ep$, that the density $\rho_\ep$ is constant plus some smallness assumptions on the initial data (including the smallness of the $L^2$ norm of the initial fluid velocity). More precisely, their result is the following.
\begin{teo}[\cite{lacave_small_2017}]
 There exists a constant $\lambda_0>0$ such that if 
\begin{itemize}
\item $\ue(0,x)\in L^2(\flr(0))$ is divergence free and verifies the compatibility condition \eqref{compat};
\item the rigid body is the disk $\slr=D(h_\ep,\ep)$;
\item the density $\rho_\ep$ is assumed to be independent of $\ep$;
\item $\wep(0,x)$ converges weakly in $L^2(\R^2)$ to some $u_0(x)$;
\item we have the following smallness of the initial data
\begin{equation}\label{smalllt}
\norm{\ue(0,x)}_{L^2(\flr(0))} +\ep |h'_\ep(0)|+\ep^2 |\theta'_\ep(0)|\leq \lambda_0
\end{equation}
\end{itemize}
then the global solution $\wep$ given by Theorem \ref{DesEst} converges weak$\ast$  in $L^\infty(\R_+;L^2(\R^2))\cap L^2\loc(\R_+;H^1(\R^2))$ towards  the weak solution of the Navier-Stokes equations in $\R^2$  with initial data $u_0$.
\end{teo}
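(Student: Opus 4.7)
The plan is to exploit the disk symmetry to reduce the problem to a fluid-solid system on a geometrically fixed domain, then use a parabolic rescaling together with the $L^p$--$L^q$ decay estimates of the linear fluid-solid semigroup from \cite{ervedoza_long-time_2014} to close a small-data fixed-point argument in a Kato-type space. Concretely, I would set $\tilde u_\ep(t,x)=u_\ep(t,x+h_\ep(t))$ so that $\tilde u_\ep$ lives on the time-independent domain $\RR^2\setminus D(0,\ep)$; because $\slr$ is a disk, the rotation does not enter the geometry, and $\tilde u_\ep$ solves a modified Navier-Stokes system with a convective correction $-h'_\ep(t)\cdot\nabla\tilde u_\ep$ and boundary value $h'_\ep(t)+\theta'_\ep(t)x^\perp$. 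Next, apply the Navier-Stokes scaling $U_\ep(t,x)=\ep\,\tilde u_\ep(\ep^2 t,\ep x)$ together with $H'_\ep(t)=\ep h'_\ep(\ep^2 t)$ and $\Theta'_\ep(t)=\ep^2\theta'_\ep(\ep^2 t)$. This puts everything on the fixed exterior $\RR^2\setminus D(0,1)$, and since the $L^2$ norm is scale-invariant in dimension two, the initial datum of the rescaled system is precisely the small quantity appearing in \eqref{smalllt}.

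On this geometrically fixed problem, I would write Duhamel's formula for the full coupled unknown $(U_\ep,H'_\ep,\Theta'_\ep)$ with respect to the linear fluid-solid semigroup, treating both the translation correction $-H'_\ep\cdot\nabla U_\ep$ and the genuine convection $U_\ep\cdot\nabla U_\ep$ as perturbations. Using the $L^p$--$L^q$ decay of the semigroup, a contraction in a space of the form
\begin{equation*}
X=\Bigl\{U\in C([0,\infty);L^2)\ ;\ \sup_{t>0}\bigl[\|U(t)\|_{L^2}+t^{1/2-1/p}\|U(t)\|_{L^p}\bigr]<\infty\Bigr\}
\end{equation*}
for some $p>2$ (coupled with the natural norm on the solid variables) would yield a unique global solution with $X$-norm of the order of the initial datum. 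Undoing the rescaling provides uniform in $\ep$ bounds on $\wep$ in $L^\infty(\R_+;L^2(\R^2))\cap L^2\loc(\R_+;H^1(\R^2))$, integrability in time of $h'_\ep$ and $\theta'_\ep$, and their decay as $t\to\infty$.

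Finally, I would extract a weak$\ast$ limit $u$ and pass to the limit in the weak formulation of Theorem~\ref{DesEst}. Given a smooth divergence-free test function $\varphi$ on $\R_+\times\R^2$, the idea is to truncate $\varphi$ so as to make it rigid on $\slr(t)$---replacing it by $h'_\ep+\theta'_\ep(x-h_\ep)^\perp$ in a shrinking neighborhood of the body---producing an admissible test $\varphi_\ep$ for the $\ep$-system; the cutoff errors vanish because $\slr(t)$ concentrates to a point while $h'_\ep,\theta'_\ep$ are integrable in time. The resulting limit equation is the standard 2D Navier-Stokes system with datum $u_0$, and uniqueness of Leray solutions in $\R^2$ identifies $u$ and upgrades subsequential convergence to convergence of the full family. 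The \textbf{main obstacle} is making the fixed-point argument genuinely uniform in $\ep$: the $L^p$--$L^q$ constants on the exterior of the unit disk are by construction $\ep$-independent, but one must recover the solid velocity $H'_\ep$ from $U_\ep$ in a way compatible with the $X$-norm, which requires a careful pressure/trace analysis of the solid equation to produce a closed contraction for the coupled unknown.
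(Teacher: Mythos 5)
This statement is quoted from \cite{lacave_small_2017} and is not proved in the present paper, which only records that the original argument is a fixed-point scheme built on the $L^p$--$L^q$ decay estimates of the linear fluid--solid semigroup from \cite{ervedoza_long-time_2014}, relying essentially on the body being a disk and on the smallness condition \eqref{smalllt}. Your proposal (change of frame to fix the geometry, parabolic rescaling to the exterior of the unit disk, Duhamel plus contraction in a Kato-type space, then passage to the limit with rigid test functions) follows essentially that same strategy, so it is consistent with the cited proof rather than an alternative to it.
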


Although they state their result for constant density, presumably the proof can be adapted to the case where $\rho_\ep\geq\rho_0$ for some $\rho_0>0$ independent of $\ep$. On the other hand, the hypothesis that $\slr$ is a disk seems to be essential in the result of \cite{lacave_small_2017}. Indeed, a key ingredient are the estimates of \cite{ervedoza_long-time_2014} and the proof of that result relies heavily on the fact that $\slr$ is a disk because it uses explicit formulae valid only for the case of a disk. Moreover, it is also hard to see how the smallness condition \eqref{smalllt} could be removed in their argument. Indeed, they use a fixed point argument and that requires smallness at some point. Let us observe that in \cite{lacave_small_2017} the authors also obtain uniform bounds in $\ep$ for the velocity of the disk. Therefore, they can prove that the center of mass of the disk converges to some trajectory. However, nothing can be said about this limit trajectory.

Here, we improve the result of \cite{lacave_small_2017} in two respects. First, the rigid body does not need to be a disk. It does not even need to be shrinking homothetically to a point like in \cite{lacave_small_2017}. We only assume that the diameter of the rigid body goes to 0. Second, we require no smallness assumption on the initial fluid velocity $\ue(0,x)$. On the other hand, we need to assume that the density of the rigid body goes to infinity and we are not able to prove uniform bounds on the motion of the rigid body as in \cite{lacave_small_2017}.  More precisely, we will prove the following result.
\begin{teo}{\label{th1}}
We assume  the hypothesis of Theorem \ref{DesEst} and moreover
\begin{itemize}
\item   $ \slr(0) \subset D(0,\ep)$;
\item the mass $m_\ep$ of the rigid body verifies that
\begin{equation}\label{hyprho}
\frac{m_\ep}{\ep^2}\to\infty\quad\text{as }\ep\to0;
\end{equation}
\item $\ue(0,x)$ is bounded independently of $\ep$ in $ L^2(\flr(0))$ and $\sqrt{m_\ep}h'_\ep(0)$ and $\sqrt{J_\ep}\theta'_\ep(0)$ are  bounded independently of $\ep$;
\item $\wep(0,x)$ converges weakly in $L^2(\R^2)$ to some $u_0(x)$ where $\wep(0,x)$ is constructed as in \eqref{extension}.
\end{itemize}
Let $(\ue, h_{\ep}, \theta_{\ep})$  be the global solution  of the system \eqref{ns-ob1}--\eqref{solid2} given by Theorem \ref{DesEst}. Then $\wep$ converges weak$\ast$ in $L^\infty(\R_+;L^2(\R^2))\cap L^2\loc(\R_+;H^1(\R^2))$ as $\ep\to0$ towards the solution of the Navier-Stokes equations in $\R^2$ with initial data $u_0$.
\end{teo}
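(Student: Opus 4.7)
The plan is to derive uniform bounds from the energy inequality, extract compactness, construct appropriate test functions, and pass to the limit in the weak formulation. Uniqueness of 2D weak Leray-Hopf solutions will then upgrade subsequential convergence to convergence of the full family.

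The energy inequality \eqref{energyineq} combined with the hypotheses on the initial data yields $\|\wep\|_{L^\infty(\R_+;L^2(\flr))}\leq C$, $m_\ep|h'_\ep|^2+J_\ep|\theta'_\ep|^2\leq C$ and $\int_0^T\|D\wep\|_{L^2(\R^2)}^2\leq C(T)$ uniformly in $\ep$. On $\Omega_\ep(t)$ the field $\wep$ is the rigid motion $h'_\ep+\theta'_\ep(x-h_\ep)^\perp$; since $h_\ep$ is the center of mass and $\rho_\ep=m_\ep/|\Omega_\ep|$ is a constant, one computes
$$\int_{\Omega_\ep(t)}|\wep|^2\,dx=\frac{m_\ep|h'_\ep|^2+J_\ep|\theta'_\ep|^2}{\rho_\ep}\leq\frac{C|\Omega_\ep|}{m_\ep}\leq\frac{C\pi\ep^2}{m_\ep}\toep 0$$
by \eqref{hyprho}: the $L^2$ energy stored in the solid is forced to vanish. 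It follows that $\wep$ is bounded in $L^\infty(\R_+;L^2(\R^2))$ uniformly, and the identity $\|\nabla\wep\|_{L^2(\R^2)}^2=2\|D\wep\|_{L^2(\R^2)}^2$ for divergence-free fields on $\R^2$ gives also $\wep\in L^2\loc(\R_+;H^1(\R^2))$ uniformly. Extract a subsequence $\wepk\rightharpoonup u$ weak-$\ast$ in $L^\infty L^2$ and weakly in $L^2\loc H^1$; the usual 2D Aubin-Lions argument (with $\pat\wep\in L^{4/3}(0,T;(H^1_{0,\sigma})')$ via the Ladyzhenskaya inequality and the fact that $\wep$ satisfies classical Navier-Stokes outside the solid) provides strong $L^2_{t,x}$ compactness of $\wepk$ on every compact set disjoint from the solid's trajectory.

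For the passage to the limit, fix divergence-free $\varphi=\nabla^\perp\psi\in C^\infty_c([0,T)\times\R^2)$ and let $\tilde\chi_\ep$ be the standard radial logarithmic cutoff with $\tilde\chi_\ep=0$ on $D(0,2\ep)$, $\tilde\chi_\ep=1$ outside $D(0,\sqrt\ep)$, and $\|\nabla\tilde\chi_\ep\|_{L^2}^2+\bigl\||x|\nabla^2\tilde\chi_\ep\bigr\|_{L^2}^2\lesssim 1/|\log\ep|$. Set $\chi_\ep(t,x):=\tilde\chi_\ep(x-h_\ep(t))$ and
$$\varphi_\ep(t,x):=\nabla^\perp\bigl[\psi(t,h_\ep(t))+\nabla\psi(t,h_\ep(t))\cdot(x-h_\ep(t))+\chi_\ep(t,x)Q(t,x)\bigr],$$
where $Q(t,x):=\psi(t,x)-\psi(t,h_\ep(t))-\nabla\psi(t,h_\ep(t))\cdot(x-h_\ep(t))$ is the second-order Taylor remainder of $\psi$ at $h_\ep(t)$. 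Then $\varphi_\ep$ is divergence free, on $D(h_\ep(t),2\ep)\supset\Omega_\ep(t)$ it equals the constant vector $\varphi(t,h_\ep(t))$ (a rigid motion, so the constraint $D\varphi_\ep=0$ holds there), and it coincides with $\varphi$ outside $D(h_\ep(t),\sqrt\ep)$. Plugging $\varphi_{\ep_k}$ in the weak formulation of Theorem~\ref{DesEst}, the various contributions are controlled by pairing weak-$\ast$ convergence of $\wepk$ with the strong convergence of the ``good'' part of $\varphi_\ep$, and splitting the nonlinear term into $\{|x-h_\ep|>r\}$ (strong compactness) and $\{|x-h_\ep|\leq r\}$ (the 2D interpolation $\|\wep\|_{L^2(B_r)}^2\leq Cr\|\wep\|_{L^2}\|\nabla\wep\|_{L^2}$ gives an $O(r^{1/2})$ uniform bound). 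Letting first $\ep\to0$ and then $r\to0$ shows that $u$ is a weak solution of Navier-Stokes in $\R^2$ with initial data $u_0$.

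The delicate point is controlling $\pat\varphi_\ep$ and the contributions localized on $\Omega_\ep$. The Taylor subtraction $Q$ ensures $|Q|\lesssim|x-h_\ep|^2$ and $|\nabla Q|\lesssim|x-h_\ep|$, so the bad terms in $\pat(\varphi_\ep-\varphi)$ supported on the cutoff annulus acquire enough powers of $|x-h_\ep|$ to absorb the singularities of $\nabla\chi_\ep$ and $\nabla^2\chi_\ep$; combined with $\int_0^T|h'_\ep|^2\,dt\leq CT/m_\ep$, the 2D interpolation $\|\wep\|_{L^4(B_{\sqrt\ep})}^2\leq C\|\wep\|_{L^2}\|\nabla\wep\|_{L^2}$, and the hypothesis \eqref{hyprho}, each such term tends to zero. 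The constant-in-$x$ Taylor terms (which persist on $\Omega_\ep$) give rise, after integration in time, to the solid's equation of motion, whose net contribution is controlled thanks to the initial data bound $\sqrt{m_\ep}|h'_\ep(0)|\leq C$ and the negligibility estimate $\int_{\Omega_\ep}|\wep|^2\to0$.
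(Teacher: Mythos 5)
Your overall architecture (energy bounds, logarithmic cut-off between the body scale and an intermediate radius, stream-function correction, splitting of the nonlinear term) is close to the paper's, but there is a genuine gap in how you treat the solid region, and it is not a technicality. You take $\varphi_\ep$ equal to the \emph{nonzero} constant $\varphi(t,h_\ep(t))$ on $\slr(t)$ and test the weak formulation of Theorem \ref{DesEst}, which carries the weight $\widetilde{\rho}_\ep$. Since $h_\ep$ is the center of mass, $\int_{\slr(t)}\wep=|\slr(t)|\,h'_\ep(t)$, so the solid contributes
\begin{equation*}
-\int_0^T m_\ep\, h'_\ep(t)\cdot\frac{d}{dt}\bigl[\varphi(t,h_\ep(t))\bigr]\,dt
\qquad\text{on the left and}\qquad
m_\ep\, h'_\ep(0)\cdot\varphi(0,0)\ \text{ on the right.}
\end{equation*}
The energy inequality only gives $m_\ep|h'_\ep|^2\leq C$, i.e. $|m_\ep h'_\ep|\leq C\sqrt{m_\ep}$, and the hypothesis $m_\ep/\ep^2\to\infty$ does \emph{not} force $m_\ep\to0$: with $m_\ep\equiv 1$ and $h'_\ep(0)=e_1$ these terms are $O(1)$ and there is no reason for them to vanish or to cancel. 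Your negligibility estimate $\int_{\slr}|\wep|^2\leq C\ep^2/m_\ep\to0$ is correct but irrelevant here, because the weak formulation weights the solid by $\rho_\ep\to\infty$, which exactly undoes that gain; and invoking Newton's law merely trades these terms for the boundary stress $\int_{\partial\slr}\sigma(u_\ep,p_\ep)n_\ep$, which is not controlled by the energy. The paper's essential idea, which your plan misses, is to make the test function vanish \emph{identically} on a neighborhood of $\overline{D}(h_\ep(t),\ep)$ (it takes $\varphi_\ep=\nabla^\perp(\etaep\psiep)$ with $\etaep=0$ near the disk), so that only the fluid equation outside a disk is ever tested and the solid's inertia never enters the computation at all.

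Two further problems. Your fixed outer radius $\sqrt\ep$ is not adapted to the possibly unbounded body velocity: the time-derivative of the moving cut-off produces terms of size $\ep\ale|h'_\ep|/\sqrt{\ln\ale}$, and with $m_\ep=\ep^{3/2}$ one can have $|h'_\ep|\sim\ep^{-3/4}$, so $\sqrt\ep\,|h'_\ep|\to\infty$; the paper chooses $\ale$ depending on $\sup_{[0,T]}|h'_\ep|$ precisely so that $\ep\ale(1+|h'_\ep|)\to0$. And the compactness step is under-justified: admissible test functions must satisfy $D\phi=0$ on the moving set $\slr(t)$, so one does not obtain $\pat\wep$ bounded in a fixed space such as $L^{4/3}(0,T;H^{-1}_\sigma)$, and ``compact sets disjoint from the solid's trajectory'' is not usable since no uniform bound on $h_\ep$ is available. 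The paper replaces this with an equicontinuity argument (Section \ref{temp}) for the pairings $t\mapsto\int\vep\cdot\varphi_\ep$ against the moving cut-off test functions, followed by Ascoli and interpolation; some substitute for that argument is needed in your plan as well.
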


It will be clear from the proof that the convergence of $\wep$ is stronger than stated. For instance, we shall prove that $\wep$ converges strongly in $L^2\loc$ (see Section \ref{temp}).

Let us remark that if the measure of $\slr$ is of order $\ep^2$ (something which is true if the rigid body  shrinks homothetically to a point, \textit{i.e.} if $\slr(0)$ is $\ep$ times a fixed rigid body) then the hypothesis \eqref{hyprho} means that the density $\rho_\ep$ of the rigid body goes to $\infty$ as $\ep\to0$.

Observe next that the boundedness of $\ue(0,x)$ in $L^2(\flr(0))$ and the boundedness of $\sqrt{m_\ep}h'_\ep(0)$ and $\sqrt{J_\ep}\theta'_\ep(0)$ imply the boundedness of  $\sqrt{\widetilde{\rho}_{\ep}(0,x)}\wep(0,x)$ in $L^2(\R^2)$. Since $\rho_\ep\to\infty$ this implies that $\wep(0,x)$ is bounded in $L^2(\R^2)$. Therefore, the weak convergence of $\wep(0,x)$ to $u_0(x)$ is not really a new hypothesis. 

Moreover, the boundedness of  $\sqrt{\widetilde{\rho}_{\ep}(0,x)}\wep(0,x)$ in $L^2(\R^2)$ and the energy inequality \eqref{energyineq} imply that $\sqrt{\widetilde{\rho}_{\ep}}\wep$ is bounded independently of $\ep$ in the space $L^\infty(\R_+;L^2(\R^2))\cap L^2\loc(\R_+;H^1(\R^2))$. Using again that $\rho_\ep\to\infty$ we deduce that $\wep$ is also bounded independently of $\ep$ in $L^\infty(\R_+;L^2(\R^2))\cap L^2\loc(\R_+;H^1(\R^2))$. And this is all we need to prove the convergence of $\wep$ towards a solution of the Navier-Stokes equations in $\R^2$. Our proof does not require that $\wep$ verifies the boundary conditions on $\slr$, nor do we need that $D\wep=0$ in $\slr$. We only need the above mentioned boundedness of $\wep$ and the fact that it verifies the Navier-Stokes equations (without any boundary condition) in the exterior of the disk $D(h_\ep(t),\ep)$. We state next a more general result.
\begin{teo}\label{mainthm}
Let $\vep$ be a time-dependent divergence free vector field defined on $\R_+\times\R^2$  belonging to the space
\begin{equation}\label{vepspace}
L^{\infty}(\R_+; L^{2}(\RR^{2})) \cap L^{2}\loc(\R_+; H^{1}(\RR^{2})) \cap {C}_{w}^{0}(\R_+; L^{2}\loc(\RR^{2}\setminus \overline{D}(h_\ep(t),\ep)))
\end{equation}
and let $h_\ep\in \Lip(\R_+;\R^2)$.
Assume moreover that
\begin{itemize}
\item $\vep$ is bounded independently of $\ep$ in the above space;
\item $\vep(0,x)$ converges weakly in $L^2$ as $\ep\to0$ to some $v_0(x)$;
\item $\vep$ verifies the Navier-Stokes equations in the exterior of the disk $\overline{D}(h_\ep(t),\ep)$:
  \begin{equation}\label{nsep}
\partial_t\vep-\nu\Delta\vep+\vep\cdot\nabla\vep=-\nabla\pi_\ep \quad \text{in the set } \{(t,x)\ ;\ t>0\text{ and } |x-h_\ep(t)|>\ep\}  
  \end{equation}
for some $\pi_\ep$;
\item the velocity of the center of the disk verifies that $\ep|h_\ep'(t)|\to0$ in $L^\infty\loc(\R_+)$ when $\ep\to0$.
\end{itemize}
Let $v$ be the unique solution of the Navier-Stokes equations in $\R^2$ with initial data $v_0$. Then $\vep$ converges to $v$ as $\ep\to0$ weak$\ast$ in the space $L^{\infty}(\R_+; L^{2}(\RR^{2})) \cap L^{2}\loc(\R_+; H^{1}(\RR^{2}))$. 
\end{teo}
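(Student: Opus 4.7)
The plan is a standard vanishing-obstacle strategy: extract a weak limit, upgrade to strong $L^2\loc$ convergence, and identify the limit as the unique Navier--Stokes solution on the plane. From the uniform bound on $\vep$ in $L^\infty(\R_+;L^2(\R^2))\cap L^2\loc(\R_+;H^1(\R^2))$, extract a subsequence (not relabeled) converging weak$\ast$ to some $\bar v$. To pass to the quadratic term in \eqref{nsep} one needs strong $L^2\loc$ convergence. Testing \eqref{nsep} against divergence-free smooth functions supported outside $\overline D(h_\ep(t),\ep)$ yields a uniform bound on $\partial_t \vep$ in a negative Sobolev space on any space-time compact set that eventually lies outside the disk, and an Aubin--Lions argument then gives $\vep\to\bar v$ strongly in $L^2\loc(\R_+\times\R^2)$.

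Next, to pass to the limit in the equation, fix a divergence-free test function $\phi\in C^\infty_c([0,\infty)\times\R^2;\R^2)$, write locally $\phi=\nabla^\perp\psi$, and introduce a smooth cutoff $\eta_\ep(t,x)$ vanishing on $\overline D(h_\ep(t),\ep)$ and equal to $1$ outside a slightly larger disk $D(h_\ep(t),R_\ep)$ with $R_\ep\to 0$. Define the divergence-free test function
\begin{equation*}
\phi_\ep(t,x)=\nabla^\perp\bigl(\eta_\ep(t,x)\,\psi(t,x)\bigr),
\end{equation*}
which vanishes on a neighborhood of the disk and coincides with $\phi$ outside the annulus $\{\ep<|x-h_\ep(t)|<R_\ep\}$. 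Inserted into the weak formulation of \eqref{nsep}, $\phi_\ep$ kills the pressure (being divergence free) and produces no boundary contribution. After splitting $\phi_\ep=\phi+(\phi_\ep-\phi)$, the principal terms tested with $\phi$ converge to the weak formulation of the Navier--Stokes equations in $\R^2$: the weak convergence $\vep\rightharpoonup\bar v$ in $L^2$ handles the linear terms and the strong $L^2\loc$ convergence from the previous step handles the quadratic one.

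It remains to verify that the errors contributed by $\phi_\ep-\phi$ vanish in the limit. These errors are supported in the thin annulus and involve $\nabla\eta_\ep$, $\nabla^2\eta_\ep$, and $\partial_t\eta_\ep=-h_\ep'(t)\cdot\nabla\eta_\ep$. The stationary ones are controlled by H\"older on the annulus combined with the 2D Sobolev estimate $\|\vep(t)\|_{L^2(\{|x-h_\ep(t)|<R_\ep\})}\lesssim R_\ep^{1-\delta}\,\|\vep(t)\|_{H^1(\R^2)}$, valid for arbitrary $\delta>0$, together with a suitable (likely logarithmic) choice of $R_\ep$. The delicate one is the time-derivative term, which couples the $1/\ep$ scale of $\nabla\eta_\ep$ to $|h_\ep'|$: here the hypothesis $\ep|h_\ep'(t)|\to 0$ in $L^\infty\loc(\R_+)$ is critical, as it exactly offsets the $\ep$-loss in $\nabla\eta_\ep$ against the Sobolev-type smallness of $\vep$ on the shrinking annulus. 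This is the main obstacle of the proof.

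Finally, the weak convergence $\vep(0,x)\rightharpoonup v_0$ transmits the initial condition, so $\bar v$ is a Leray--Hopf weak solution of the 2D Navier--Stokes equations on $\R^2$ with initial data $v_0$. Uniqueness of such solutions in dimension two forces $\bar v=v$, and the convergence therefore holds along the full family $\vep$.
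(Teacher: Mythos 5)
Your overall strategy (divergence-free cut-off test functions vanishing near the moving disk, strong $L^2\loc$ compactness, identification of the limit, uniqueness of the 2D solution) is the same as the paper's, but the two steps you treat as routine are exactly where the difficulties sit, and as stated both contain genuine gaps.

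First, the compactness step. You propose to bound $\pat\vep$ in a negative Sobolev norm ``on any space-time compact set that eventually lies outside the disk'' and invoke Aubin--Lions. But the hypotheses give no control on $h_\ep(t)$ itself, only on $\ep|h'_\ep(t)|$; the speed $|h'_\ep|$ may be as large as $o(1/\ep)$ and the disk may sweep through any fixed compact set, at different times for different $\ep$. There are therefore no fixed space-time compact sets that eventually avoid the disk, and a fixed test function cannot be inserted into \eqref{nsep}: it must be cut off near $D(h_\ep(t),\ep)$, and the time derivative of that (necessarily time-dependent) cut-off produces precisely the $h'_\ep\cdot\nabla\etaep$ term you defer to the limit-passage step. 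This is why the paper does not estimate $\pat\vep$ at all, but instead introduces the functionals $\Xi_\ep(t)=\int\vep(t)\cdot\varphi_\ep(t)$, proves their equicontinuity in $H^{-3}_\sigma$ via Lemma \ref{timeder}, applies Ascoli, and only then transfers compactness back to $\vep$ by showing $\|\Xi_\ep(t)-\vep(t)\|_{H^{-2}}\to0$ uniformly in time. Your shortcut skips the step that makes the argument close.

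Second, the test function. You take $\phi_\ep=\nabla^\perp(\etaep\psi)$ with an unnormalized stream function. The viscous term needs $\nabla\phi_\ep\to\nabla\phi$ in $L^2$, and $\nabla\phi_\ep$ contains $\nabla^2\etaep\,\psi$; even for the optimal harmonic (logarithmic) cut-off, $\|\nabla^2\etaep\|_{L^2}\sim(\ep\ln\ale)^{-1}$ blows up, so this term does not vanish unless $\psi$ is small where $\nabla^2\etaep$ is supported. The paper's fix is to subtract the time-dependent constant $\psi(t,h_\ep(t))$, which gives $|\psiep(t,x)|\le|x-h_\ep(t)|\,\nl\infty{\varphi}$ and pairs with the weighted bound $\nll2{|x-h_\ep(t)|\nabla^2\etaep}\le C/\sqrt{\ln\ale}$; the price is an extra term $h'_\ep(t)\cdot\nabla\psi(t,h_\ep(t))$ in $\pat\psiep$ (relation \eqref{psiep3}) that must also be controlled. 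Relatedly, your cut-off must have the logarithmic profile on an annulus $\ep<|x-h_\ep(t)|<\ep\ale$ with $\ale\to\infty$ and $\ep\ale(1+|h'_\ep|)\to0$: a generic smooth cut-off at scale $R_\ep$ has $\|\nabla\eta_\ep\|_{L^2}\gtrsim1$ in two dimensions, and none of your annulus error terms would then go to zero. Your ``likely logarithmic choice of $R_\ep$'' gestures at this, but the profile of the cut-off, the calibration of $\ale$ against $|h'_\ep|$, and the normalization of the stream function are the actual content of the proof rather than details to be filled in.
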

Theorem \ref{mainthm} with $v_\ep=\wep$ implies Theorem \ref{th1}. Indeed, we already observed above that $\wep$ has all the properties required from $\vep$ in Theorem \ref{mainthm}. And the hypothesis made on the mass of the rigid body, see relation \eqref{hyprho}, in Theorem \ref{th1} implies that $\ep|h_\ep'(t)|\to0$ in $L^\infty\loc(\R_+)$ when $\ep\to0$. This can be easily seen from the energy estimate \eqref{sfenerg}. Indeed, the hypothesis of Theorem \ref{th1} implies that the right-hand side of \eqref{sfenerg} is bounded uniformly in $\ep$ so $\sqrt{m_\ep}h_\ep'$ is uniformly bounded in $t$ and $\ep$. The fact that  $\frac{m_\ep}{\ep^2}\to\infty$ and the boundedness of  $\sqrt{m_\ep}h_\ep'$ implies that $\ep h'_\ep\to0$  as $\ep\to0$ uniformly in time.

The idea of the proof of Theorem \ref{mainthm} is completely different from the proof given in \cite{lacave_small_2017}. We multiply \eqref{nsep} with a cut-off vanishing on the disk  $D(h_\ep(t),\ep)$  constructed in a very particular manner. We then pass to the limit with classical compactness methods. The difficulty here is that the cut-off function itself depends on the time, so time-derivative estimates of $v_\ep$ are not so easy to obtain. Also, passing to the limit in the terms $\partial_t v$ and $\Delta v$ is not obvious: the first is difficult because  the time derivative is hard to control and the second one is difficult because the cut-off introduces negative powers of $\ep$ in this term. 

The plan of the paper is the following. In the following section we introduce some notation and prove some preliminary results. In Section \ref{defcutof} we construct the special cut-off near the rigid body. The required temporal estimates are proved in Section \ref{temp}. Finally, we pass to the limit in Section \ref{paslim}.

\section{Notation and preliminary results}
\label{sectnot}
We use the classical notation $C^m$ for functions with $m$ continuous derivatives and $H^m$ the Sobolev space of functions with $m$ square-integrable weak derivatives. The notation $C^m_b$ stands for functions in $C^m$ with bounded derivatives up to order $m$. All function spaces and norms are considered to be taken on $\R^2$ in the $x$ variable unless otherwise specified.  We define $C^\infty_{0,\sigma}$ to be the space of smooth, compactly supported and divergence free vector fields on $\R^2$.  The derivatives are always taken with respect to the variable $x$ unless otherwise specified. The double dot product of two matrices $M=(m_{ij})$ and $N=(n_{ij})$ denotes the quantity $M:N=\sum_{i,j}m_{ij}n_{ij}$. We denote by $C$ a generic universal constant whose value can change from one line to another.

Let  $\varphi\in C^1_b (\R_+;C^\infty_{0,\sigma})$. We define the stream function $\psi$ of $\varphi$ by 
\begin{equation*}
\psi(x)=\int_{\mathbb{R}^{2}} \frac{(x-y)^{\perp}}{2\pi |x-y| ^{2}} \cdot \varphi(y) dy. 
\end{equation*}

It is well-known that $\psi\in C^1_b (\R_+;C^\infty)$ and  $\na^{\perp} \psi = \varphi$. The stream function $\psi$  given above is characterized by two facts. One is that $\nabla^\perp \psi=\varphi$ and another one is that it vanishes at infinity. But in our case, the vanishing at infinity is not important since we will use compactly supported test functions. On the other hand, it is useful to have the stream function small in the neighborhood of the rigid body. We define now a modified stream function, denoted by $\psiep$, which vanishes at the center of the disk $\dep$:
\begin{equation}\label{psiepdef}
\psiep(t,x)=\psi(t,x)-\psi(t,h_{\ep}(t)).
\end{equation}
Observe that even if $\varphi$ is constant in time, the modified stream function still depends on the time through $h_\ep$. We collect some properties of the modified stream function in the following lemma.
\begin{lem}
The modified stream function $\psiep$ has the following properties:
\begin{enumerate}
\item We have that $\psiep\in \Lip (\R_+;C^\infty)$ and $\na^{\perp} \psiep = \varphi$.
\item For all $t,R\geq0$ and $x\in\R^2$ we have that
  \begin{gather}
  \|\psiep(t,\cdot)\|_{L^\infty(D(h_\ep(t),R))}\leq R\nl\infty{\varphi(t,\cdot)}\label{psiep2}\\
\intertext{and}
\|\partial_t\psiep(t,\cdot)\|_{L^\infty(D(h_\ep(t),R))}\leq R\nl\infty{\partial_t\varphi(t,\cdot)}+|h_\ep'(t)|\nl\infty{\varphi(t,\cdot)}\label{psiep3}
  \end{gather}
with the remark that the last relation holds true only almost everywhere in time.
\end{enumerate}
\end{lem}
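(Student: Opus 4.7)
The plan is to exploit the fact that $\psi_\ep$ vanishes at $h_\ep(t)$ by construction, and to use the relation $\nabla^\perp \psi = \varphi$ so that the gradient of $\psi$ (and of $\psi_\ep$) is pointwise bounded by $|\varphi|$.

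For part (i), since $\psi(t,h_\ep(t))$ does not depend on $x$, we immediately get $\nabla^\perp\psiep=\nabla^\perp\psi=\varphi$ and hence $\psiep(t,\cdot)\in C^\infty$ for each $t$. For the time regularity, I would write $\psiep(t,x)=\psi(t,x)-\psi(t,h_\ep(t))$ and note that $t\mapsto\psi(t,x)$ is $C^1_b$ by hypothesis on $\varphi$, while $t\mapsto h_\ep(t)$ is Lipschitz. Composing with the smooth bounded function $\psi$ (more precisely using that $\partial_t\psi$ and $\nabla\psi$ are bounded on $\R_+\times\R^2$) yields that $\psi(t,h_\ep(t))$ is Lipschitz in $t$, hence $\psiep \in W^{1,\infty}(\R_+;C^\infty)$.

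For part (ii), the first estimate \eqref{psiep2} follows from the mean value inequality: for $x\in D(h_\ep(t),R)$,
\begin{equation*}
|\psiep(t,x)| = |\psiep(t,x)-\psiep(t,h_\ep(t))| \leq |x-h_\ep(t)|\,\|\nabla\psiep(t,\cdot)\|_{L^\infty} \leq R\,\nl\infty{\varphi(t,\cdot)},
\end{equation*}
where I used that $\psiep(t,h_\ep(t))=0$ by the very definition \eqref{psiepdef}, and that $|\nabla\psiep|=|\nabla^\perp\psiep|=|\varphi|$ pointwise.

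For \eqref{psiep3}, I would differentiate in time almost everywhere:
\begin{equation*}
\partial_t\psiep(t,x) = \partial_t\psi(t,x) - (\partial_t\psi)(t,h_\ep(t)) - \nabla\psi(t,h_\ep(t))\cdot h_\ep'(t).
\end{equation*}
The crucial observation is that $\partial_t\psi$ is itself the stream function (in the sense of the Biot-Savart convolution) of $\partial_t\varphi$, since the Biot-Savart kernel is independent of $t$. Therefore applying the same mean value argument as in \eqref{psiep2} to $\partial_t\psi$ gives
\begin{equation*}
|\partial_t\psi(t,x)-(\partial_t\psi)(t,h_\ep(t))| \leq R\,\nl\infty{\partial_t\varphi(t,\cdot)}
\end{equation*}
for $x\in D(h_\ep(t),R)$, while the remaining term is bounded by $|h_\ep'(t)|\,|\nabla\psi(t,h_\ep(t))|\leq |h_\ep'(t)|\,\nl\infty{\varphi(t,\cdot)}$. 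Adding the two gives \eqref{psiep3}. The only mild subtlety is justifying differentiation in $t$ almost everywhere (hence the caveat in the statement), which follows from the Lipschitz regularity of $h_\ep$ combined with the $C^1$ smoothness of $\psi$ in both variables; there is no real obstacle here.
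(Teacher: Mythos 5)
Your proposal is correct and follows essentially the same route as the paper: the mean value theorem combined with $|\nabla\psi_\ep|=|\nabla^\perp\psi_\ep|=|\varphi|$ for \eqref{psiep2}, and the same chain-rule decomposition of $\partial_t\psi_\ep$ with the observation that $\nabla\partial_t\psi$ is controlled by $\partial_t\varphi$ for \eqref{psiep3}. No discrepancies worth noting.
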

\begin{proof}
Clearly $\nabla^\perp\psiep=\nabla^\perp\psi=\varphi$. Since $h_\ep\in \Lip (\R_+)$ and $\psi\in C^1_b (\R_+;C^\infty)$ we immediately see that $\psiep\in \Lip (\R_+;C^\infty)$ which proves (i).  

By the mean value theorem
\begin{equation}\label{psiep1}
|\psiep(t,x)|
= |\psi(t,x)-\psi(t,h_{\ep}(t))|
\leq |x-h_\ep(t)|\nl\infty{\nabla\psi(t,\cdot)}
= |x-h_\ep(t)|\nl\infty{\varphi(t,\cdot)}.
\end{equation}
Relation \eqref{psiep2} follows. To prove \eqref{psiep3} we recall that $h_\ep$ is Lipschitz in time so it is almost everywhere differentiable in time. Let $t$ be a time where $h_\ep$ is differentiable. We write
\begin{align*}
\pat\psi_{\ep }(t,x)
&=\pat (\psi(t,x)-\psi(t,h_\ep(t)))\\
&=\pat\psi(t,x)-\pat\psi(t,h_\ep(t))-h_\ep'(t)\cdot\nabla\psi(  t,h_\ep(t))
\end{align*}
so
\begin{align*}
\|\partial_t\psiep(t,\cdot)\|_{L^\infty(D(h_\ep(t),R))}
&\leq \|\pat\psi(t,x)-\pat\psi(t,h_\ep(t))\|_{L^\infty(D(h_\ep(t),R))}  +|h_\ep'(t)|\nl\infty{\nabla\psi(t,\cdot)}\\
& \leq R\nl\infty{\partial_t\nabla\psi(t,\cdot)}+|h_\ep'(t)|\nl\infty{\varphi(t,\cdot)}\\
& = R\nl\infty{\partial_t\varphi(t,\cdot)}+|h_\ep'(t)|\nl\infty{\varphi(t,\cdot)}.  
\end{align*}
This completes the proof of the lemma.
\end{proof}

We will need to define a cut-off function near the rigid body with $L^2$ norm of the gradient as small as possible. This will be done in the next section. For the moment, let us recall that the function that minimizes the  $L^2$ norm of the gradient, that vanishes for $|x|=A$ and is equal to 1 for $|x|=B$ is harmonic. So it is given by the explicit formula
\begin{equation*}
 f _{A,B}:\R^2\to[0,1], \quad  f _{A,B}(x) =
\begin{cases}
0&\text{ if }|x|<A\\
\frac{\ln |x| - \ln A }{ \ln B- \ln A} &\text{ if }A<|x|<B\\
1&\text{ if }|x|>B.
\end{cases}
\end{equation*}
This special cut-off has the following properties.
\begin{lem}\label{f_AB}
We have that $ f _{A,B}\in W^{1,\infty}$. Moreover,
\begin{align*}
\norm{ f _{A,B}(x)-1}_{L^{2}}^{2}&=  \pi A^2\bigl(\frac{\al^2}{2\ln^2\al}-\frac1{2\ln^2\al}-\frac1{\ln\al}\bigr),\\
\norm{\na  f _{A,B}}_{L^{2}} ^{2}&=\frac{2\pi}{\ln\al}  \\
\intertext{and}
\normm{|x|\na^2  f _{A,B}}_{L^{2}(A<|x|<B)} ^{2}&=\frac{4\pi}{\ln\al}  
\end{align*}
where $\al=\frac BA$.
\end{lem}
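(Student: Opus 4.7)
The plan is to establish all four statements by direct explicit computation in polar coordinates. The function $f_{A,B}$ is smooth on each of the three regions $\{|x|<A\}$, $\{A<|x|<B\}$, $\{|x|>B\}$, radial, and continuous across $|x|=A$ and $|x|=B$; it is moreover Lipschitz since on the annulus $\nabla f_{A,B}(x)=\frac{1}{\ln\alpha}\,\frac{x}{|x|^2}$, whose modulus is bounded by $\frac{1}{A\ln\alpha}$ on the annulus and vanishes outside. This gives $f_{A,B}\in W^{1,\infty}$.

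For the $L^2$ norm of $\nabla f_{A,B}$, I just compute: only the annulus contributes, $|\nabla f_{A,B}(x)|^2=\frac{1}{\ln^2\alpha\,|x|^2}$, and therefore
\begin{equation*}
\|\nabla f_{A,B}\|_{L^2}^2=\frac{1}{\ln^2\alpha}\int_A^B\frac{2\pi r}{r^2}\,dr=\frac{2\pi}{\ln^2\alpha}\ln\alpha=\frac{2\pi}{\ln\alpha}.
\end{equation*}
For the weighted Hessian norm, the same kind of computation works. Writing $g(x)=\ln|x|$, one has $\partial_i\partial_j g=\frac{\delta_{ij}}{|x|^2}-\frac{2x_ix_j}{|x|^4}$, and a short contraction gives $|\nabla^2 g|^2=\frac{2}{|x|^4}$. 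Since $f_{A,B}=\frac{g-\ln A}{\ln\alpha}$ on the annulus,
\begin{equation*}
\bigl\||x|\nabla^2 f_{A,B}\bigr\|_{L^2(A<|x|<B)}^2=\frac{1}{\ln^2\alpha}\int_A^B\frac{2}{r^4}\cdot r^2\cdot 2\pi r\,dr=\frac{4\pi}{\ln^2\alpha}\ln\alpha=\frac{4\pi}{\ln\alpha}.
\end{equation*}

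The only genuinely lengthy step is the computation of $\|f_{A,B}-1\|_{L^2}^2$, which I would handle by first rewriting the annulus value as $f_{A,B}(x)-1=-\frac{\ln(B/|x|)}{\ln\alpha}$, so that
\begin{equation*}
\|f_{A,B}-1\|_{L^2}^2=\pi A^2+\frac{2\pi}{\ln^2\alpha}\int_A^B r\ln^2\!\bigl(B/r\bigr)\,dr.
\end{equation*}
The substitution $t=\ln(B/r)$ converts the remaining integral into $B^2\int_0^{\ln\alpha}t^2 e^{-2t}\,dt$, which I evaluate by integrating by parts twice to get $-\frac{e^{-2t}}{4}(2t^2+2t+1)$ as an antiderivative; evaluating from $0$ to $\ln\alpha$ and using $B^2 e^{-2\ln\alpha}=A^2$ yields
\begin{equation*}
\int_A^B r\ln^2\!\bigl(B/r\bigr)\,dr=\frac{B^2}{4}-\frac{A^2}{4}\bigl(2\ln^2\alpha+2\ln\alpha+1\bigr).
\end{equation*}
Plugging back in and simplifying (in particular, the $\pi A^2$ cancels the $-\pi A^2$ arising from the "$+1$" inside the parenthesis) gives exactly the claimed formula
\begin{equation*}
\|f_{A,B}-1\|_{L^2}^2=\pi A^2\Bigl(\tfrac{\alpha^2}{2\ln^2\alpha}-\tfrac{1}{2\ln^2\alpha}-\tfrac{1}{\ln\alpha}\Bigr).
\end{equation*}

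The main (and essentially only) obstacle is bookkeeping in this last computation: the cancellation of the disk contribution $\pi A^2$ with the "$+1$" term from the antiderivative is what produces the final clean expression, and it is easy to make a sign error in the substitution $t=\ln(B/r)$. Everything else reduces to a one-line radial integral.
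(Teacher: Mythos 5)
Your proposal is correct and follows essentially the same route as the paper: explicit computation of $\nabla f_{A,B}$ and $\nabla^2 f_{A,B}$ on the annulus followed by one-line radial integrals, with the only nontrivial work being the evaluation of $\int_A^B r\ln^2(B/r)\,dr$ (the paper uses the rescaling $y=x/B$ where you substitute $t=\ln(B/r)$, but this is the same integral and the same cancellation of the $\pi A^2$ disk contribution). All the computations check out, including the identity $|\nabla^2\ln|x||^2=2/|x|^4$ and the antiderivative $-\tfrac{e^{-2t}}{4}(2t^2+2t+1)$.
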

\begin{proof}
The Lipschitz character of $ f _{A,B}$ is obvious once we remark that $ f _{A,B}$ is smooth for $|x|\neq A$ and $|x|\neq B$ and  continuous across $|x|=A$ and $|x|=B$. 

Next, we have that
\begin{align*}
\norm{ f _{A,B}(x)-1}_{L^{2}}^{2}
&=  \int_{|x|<A}1\,dx+\int_{A<|x|<B}\Bigl| \frac{\ln |x| - \ln B }{ \ln B- \ln A}  \Bigr|^{2} dx\\
&= \pi A^2+ \frac{B^2}{(\ln B- \ln A)^2}\int_{A/B<|y|<1}\ln^2 |y|\,dy\\
&=  \pi A^2\bigl(1+\frac{\al^2}{\ln^2\al}\int_{1/\al}^1\ln^2r\, 2r\,dr\bigr)\\
&= \pi A^2\bigl(\frac{\al^2}{2\ln^2\al}-\frac1{2\ln^2\al}-\frac1{\ln\al}\bigr).
\end{align*}
From the definition of $ f _{A,B}$, we compute for $A<|x|<B$
$$ \na  f _{A,B} = \frac{x}{|x|^{2}\ln\al}\;\;\text{and} \;\; |\na^{2}  f _{A,B}| = \frac{\sqrt2}{|x|^{2}\ln\al}\cdot$$
So
\begin{equation*}
\norm{\na  f _{A,B}}_{L^{2}} ^{2}
=\frac1{\ln^2\al}\int_{A<|x|<B}\frac1{|x|^2}\,dx=\frac{2\pi}{\ln\al}  
\end{equation*}
and
\begin{equation*}
\normm{|x|\na^2  f _{A,B}}_{L^{2}(A<|x|<B)} ^{2}
=\frac2{\ln^2\al}\int_{A<|x|<B}\frac1{|x|^2}\,dx=\frac{4\pi}{\ln\al}.  
\end{equation*}
This completes the proof of the lemma. 
\end{proof}

\section{Cut-off near the rigid body}
\label{defcutof}

We begin now the proof of Theorem \ref{mainthm}. It suffices to prove the following statement.
\begin{prop}\label{prop}
For all finite times $T>0$ there exists a subsequence $\vepk$ which converges weak$\ast$ in $L^\infty(0,T;L^2)\cap L^2(0,T;H^1)$ towards a solution $v\in L^\infty(0,T;L^2)\cap L^2(0,T;H^1)$ of the Navier-Stokes equations on $[0,T)\times\R^2$ with initial data $v_0$.    
\end{prop}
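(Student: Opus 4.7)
The plan is to extract a weak-$\ast$ limit $v$ of $\vep$, then verify that $v$ satisfies the Navier-Stokes equations on $[0,T)\times\R^2$ with initial datum $v_0$. By Banach-Alaoglu and the boundedness hypothesis on $\vep$, a subsequence $\vepk$ converges weak-$\ast$ in $L^\infty(0,T;L^2)\cap L^2(0,T;H^1)$ to some $v$. Since the 2D Navier-Stokes equations with $v_0\in L^2$ admit a unique weak solution, it is enough to show that $v$ is a weak solution, and the same argument applied to arbitrary further subsequences will then identify the full limit.

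To take \eqref{nsep} to the limit I would cut off an arbitrary test function $\varphi\in C^1_b(\R_+;C^\infty_{0,\sigma})$ compactly supported in $[0,T)$ away from the moving disk. Using the tools already built, set
\[
\varphi_\ep(t,x)=\nabla^\perp\bigl(\etaep(t,x)\psiep(t,x)\bigr)=\etaep\varphi+\psiep\nabla^\perp\etaep,
\]
where $\psiep$ is the modified stream function \eqref{psiepdef} (vanishing at $h_\ep(t)$) and $\etaep(t,x)=f_{\ep,\sqrt\ep}(x-h_\ep(t))$ is the harmonic annular cut-off of Lemma \ref{f_AB}. Then $\varphi_\ep$ is divergence free and vanishes on the disk $\dep$, so testing \eqref{nsep} against $\varphi_\ep$ is legitimate and produces the weak Navier-Stokes identity for $\vep$ with $\varphi_\ep$ in place of $\varphi$. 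The errors $\varphi-\varphi_\ep$ and $\nabla\varphi-\nabla\varphi_\ep$ tend to zero in the required norms thanks to Lemma \ref{f_AB} combined with \eqref{psiep2}: the crucial observation is that the singular behavior $|\nabla^2\etaep|\sim1/(|x-h_\ep|^2\ln(1/\sqrt\ep))$ in the viscous term is exactly compensated by the linear vanishing $|\psiep|\lesssim|x-h_\ep|$ near the disk, leading to an $L^2$ error of order $1/\sqrt{\ln(1/\ep)}\to0$. For the time-derivative term, the contributions coming from $\pat\etaep=-h_\ep'\cdot\nabla\etaep$ and $\pat\psiep$ are handled via the estimate \eqref{psiep3} and the crucial hypothesis $\ep|h_\ep'(t)|\to0$ in $L^\infty\loc(\R_+)$.

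The main obstacle is obtaining strong $L^2\loc$ convergence of $\vepk$ to $v$, needed to pass to the limit in the nonlinear term $\int\vep\otimes\vep:\nabla\varphi_\ep$. Because $\vep$ satisfies \eqref{nsep} only outside a disk whose center $h_\ep(t)$ is not assumed bounded in $\ep$, the straightforward route of bounding $\pat\vep$ in a negative Sobolev space and invoking Aubin-Lions is unavailable. The strategy, which I would spell out in Section \ref{temp}, is to test \eqref{nsep} against suitably cut-off time-independent, smooth, compactly-supported, divergence-free test functions, deduce equicontinuity in $t$ of $t\mapsto\int\vep(t,x)\cdot\varphi(x)\,dx$ for each $\varphi\in C^\infty_{0,\sigma}$, and then combine with the weak $L^2$ bound and an Arzela-Ascoli plus density argument to upgrade to strong convergence in $L^2\loc((0,T)\times\R^2)$. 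With this in hand, one passes to the limit term by term in the weak Navier-Stokes identity to identify $v$ as the unique Leray solution on $\R^2$ with initial data $v_0$.
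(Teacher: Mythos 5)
Your overall strategy coincides with the paper's: build test functions $\nabla^\perp(\etaep\psiep)$ supported away from the moving disk, obtain equicontinuity of the functionals $t\mapsto\int\vep\cdot\phiep$ to get strong $L^2(0,T;L^2\loc)$ compactness via Ascoli, and pass to the limit term by term, invoking uniqueness of the 2D Leray solution to identify the limit. There is, however, one genuine gap: the fixed choice of the outer cut-off radius in $\etaep(t,x)=f_{\ep,\sqrt\ep}(x-h_\ep(t))$. Every term involving $\pat\etaep=-h'_\ep(t)\cdot\nabla f_\ep(x-h_\ep(t))$ --- both in the equicontinuity estimate of Section \ref{temp} and in the time-derivative term of Section \ref{paslim} --- is controlled by a quantity of the form $\frac{B}{\sqrt{\ln(B/\ep)}}\,|h'_\ep(t)|$, where $B$ is the outer radius (this is the second term in Lemma \ref{timeder}). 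With $B=\sqrt\ep$ you would need $\sqrt\ep\,|h'_\ep(t)|\to0$, but the hypothesis of Theorem \ref{mainthm} only gives $\ep|h'_\ep(t)|\to0$; if, say, $|h'_\ep|\sim\ep^{-3/4}$, the hypothesis holds while $\sqrt\ep\,|h'_\ep|\sim\ep^{-1/4}\to\infty$, and the logarithmic factor cannot compensate. This is precisely why the paper takes the outer radius to be $\ep\ale$ with $\ale$ chosen \emph{adaptively} as in \eqref{ale}, for instance $\ale=\max\bigl(100,\,1/\sup_{[0,T]}\sqrt{\ep+\ep|h'_\ep|}\bigr)$, which guarantees simultaneously that $\ale\to\infty$ (so the $1/\sqrt{\ln\ale}$ gains survive) and that $\ep\ale(1+|h'_\ep|)\to0$ (so the time-derivative and $L^2$ error terms vanish). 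Your argument goes through once $\sqrt\ep$ is replaced by such an $\ep\ale$.

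A secondary point: the harmonic profile $f_{A,B}$ of Lemma \ref{f_AB} is only Lipschitz, and $\nabla^2 f_{A,B}$ carries surface measures on the circles $|x|=A$ and $|x|=B$; since the viscous term pairs $\nabla\vep\in L^2$ with $\nabla\phiep$, one must first mollify $f_{A,B}$ near these two circles (as the paper does in Lemma \ref{flem}) before using $\nabla^\perp(\etaep\psiep)$ as a test function. This costs nothing in the estimates but should not be skipped.
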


Indeed, let us assume that Proposition \ref{prop} is proved. We know that the Navier-Stokes equations in dimension two have a unique global solution $v$ in the space $L^\infty(\R_+;L^2)\cap L^2(\R_+;H^1)$, see for example \cite{lions_quelques_1969}. The solution $v$ from Proposition \ref{prop} is necessarily the restriction to $[0,T]$ of this unique global solution. Since we have uniqueness of the limit, we deduce that the whole sequence $\vep$ converges weak$\ast$ in $L^\infty(0,T;L^2)\cap L^2(0,T;H^1)$ towards $v$. Since $T$ is arbitrary, Theorem \ref{mainthm} follows.

\bigskip

The rest of this paper is devoted to the proof of Proposition \ref{prop}. Let $T>0$ be fixed. From now on the time $t$ is assumed to belong to the interval $[0,T]$. The constant $K$ will denote a constant which depends only on $\nu$ and
\begin{equation*}
 \sup_{0<\ep\leq1}\|\vep\|_{L^\infty(0,T;L^2)\cap L^2(0,T;H^1)}
\end{equation*}
and whose value may change from one line to another. In particular, the constant $K$ does not depend on $\ep$.

By hypothesis we know that
\begin{equation}\label{prophep}
 \lim_{\ep\to0} \sup_{[0,T]}\ep|h'_\ep(t)|=0.
\end{equation}

We assume that $\ep\leq1/100$ and we choose $\ale$ such that
\begin{equation}
  \label{ale}
100\leq\ale\leq\frac1\ep,\quad \lim_{\ep\to0} \ale=\infty\quad\text{and}\quad \lim_{\ep\to0} \ep\ale(1+|h'_\ep(t)|)=0
\end{equation}
uniformly in $t\in[0,T]$. The existence of such an $\ale$ follows from \eqref{prophep}. Indeed, we could choose for instance
\begin{equation*}
\ale=\max\Bigl(100, \frac1{\sup\limits_{[0,T]}\sqrt{\ep+\ep|h_\ep'(t)|}}\Bigr).  
\end{equation*}

We construct in the following lemma a special cut-off function $\fep$ near the disk $\dep$ such that  $ f _\ep(x)=0$ for all $|x|\leq\ep$ and $ f _\ep(x)=1$ for all $|x|\geq\ep\ale$. 
\begin{lem}\label{flem}
There exists a smooth cut-off function $\fep\in C^\infty(\R^2;[0,1])$ such that
\begin{enumerate}
\item $\fep$ vanishes in the neighborhood of the disk $\overline{D}(0,\ep)$ and $\fep=1$ for $|x|\geq\ep\ale$;
\item there  exists a universal constant $C$ such that
  \begin{equation*}
\nl\infty{ f _\ep}=1,\quad  \nl2{\nabla f _\ep}\leq \frac{C}{\sqrt{\ln\ale}} ,\quad  \nll2{|x|\nabla^2 f _\ep}\leq\frac{C}{\sqrt{\ln\ale}}  \quad\text{and}\quad\nl2{ f _\ep-1}\leq C\frac{\ep\ale}{\ln\ale}\cdot
  \end{equation*}
\end{enumerate}
\end{lem}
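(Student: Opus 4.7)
My plan is to obtain $f_\ep$ by post-composing the Lipschitz harmonic cut-off $f_{A,B}$ of Lemma \ref{f_AB} (with $A=\ep$ and $B=\ep\ale$) with a fixed smooth one-variable template. Specifically, fix once and for all a nondecreasing $\chi \in C^\infty(\R;[0,1])$ with $\chi \equiv 0$ on $(-\infty,1/4]$ and $\chi \equiv 1$ on $[3/4,\infty)$, and set $f_\ep(x) = \chi\bigl(f_{\ep,\ep\ale}(x)\bigr)$. The one-variable cut-off $\chi$ plays two roles: it flattens the Lipschitz kinks of $f_{\ep,\ep\ale}$ on the two spheres $|x|=\ep$ and $|x|=\ep\ale$, where $f_{\ep,\ep\ale}$ takes exactly the values $0$ and $1$ on which $\chi$ is constant, thereby yielding smoothness of $f_\ep$; and it forces $f_\ep$ to vanish on the full disk $|x|\le \ep\ale^{1/4}$, in particular in a genuine neighborhood of $\overline{D}(0,\ep)$. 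Symmetrically, $f_\ep=1$ as soon as $|x|\ge \ep\ale^{3/4}$, hence for $|x|\ge \ep\ale$. The bound $\nl\infty{f_\ep}=1$ is then immediate.

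For the derivative estimates, the chain rule gives
\[
\nabla f_\ep = \chi'(f_{\ep,\ep\ale})\nabla f_{\ep,\ep\ale}\quad\text{and}\quad
\nabla^2 f_\ep = \chi''(f_{\ep,\ep\ale})\,\nabla f_{\ep,\ep\ale}\otimes\nabla f_{\ep,\ep\ale} + \chi'(f_{\ep,\ep\ale})\nabla^2 f_{\ep,\ep\ale},
\]
and crucially, both expressions are supported in the annulus $\{\ep\ale^{1/4}\le|x|\le\ep\ale^{3/4}\}$, which is strictly contained in $\{\ep<|x|<\ep\ale\}$ where $f_{\ep,\ep\ale}$ is genuinely smooth with $|\nabla f_{\ep,\ep\ale}|=1/(|x|\ln\ale)$ and $|\nabla^2 f_{\ep,\ep\ale}|\le C/(|x|^2\ln\ale)$. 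The singular Hessian contributions of $f_{\ep,\ep\ale}$ carried by the two boundary spheres are therefore never seen by $f_\ep$. Integrating $1/|x|^2$ over this annulus yields $\pi\ln\ale$, and combining with the $1/\ln^2\ale$ factors coming from the squared derivatives of $f_{\ep,\ep\ale}$ immediately produces $\nl2{\nabla f_\ep}\le C/\sqrt{\ln\ale}$ and $\nll2{|x|\nabla^2 f_\ep}\le C/\sqrt{\ln\ale}$ with $C$ depending only on $\chi$.

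For the $L^2$ bound on $f_\ep-1$, since this function is supported in $\{|x|\le\ep\ale^{3/4}\}$ and bounded by $1$, one has the crude estimate $\|f_\ep-1\|_{L^2}^2\le\pi\ep^2\ale^{3/2}$, so the target $\|f_\ep-1\|_{L^2}\le C\ep\ale/\ln\ale$ is equivalent to the scalar inequality $\pi\ln^2\ale\le C\sqrt\ale$. A short calculus argument shows that $\ln^2\ale/\sqrt\ale$ is decreasing on $[e^4,\infty)$, so on the range $\ale\ge 100$ it is bounded by its value at $\ale=100$, giving a universal constant $C$.

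The subtlest point, to my mind, is balancing the competing demands on $f_\ep$: the $O(1/\sqrt{\ln\ale})$ control on $\nabla f_\ep$ essentially pins down the logarithmic harmonic profile, but that profile is only Lipschitz. A naive mollification at a single scale $\eta$ would produce a Hessian of size $1/(|x|\eta\ln\ale)$ on a shell of width $\eta$ around the outer sphere $|x|=\ep\ale$, inflating $\||x|\nabla^2 f_\ep\|_{L^2}^2$ by a factor proportional to $\ep\ale/\eta$, which is uncontrollable at the inner scale. The composition trick $\chi\circ f_{\ep,\ep\ale}$ completely avoids this by making $\chi'$ and $\chi''$ vanish in a neighborhood of both spheres, so only the smooth interior of $f_{\ep,\ep\ale}$ ever contributes.
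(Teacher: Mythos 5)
Your proof is correct, and the smoothing mechanism is genuinely different from the paper's. Both constructions start from the harmonic profile $\fept=f_{\ep,\ep\ale}$ of Lemma \ref{f_AB}, and in both cases the issue is that $\fept$ is only Lipschitz across the circles $|x|=\ep$ and $|x|=\ep\ale$. The paper repairs this multiplicatively: it sets $f_\ep=1+g^2_\ep\bigl(g^1_\ep\fept-1\bigr)$ with radial cut-offs $g^1_\ep,g^2_\ep$ acting in the thin annuli $2\ep\le|x|\le4\ep$ and $\ep\ale/4\le|x|\le\ep\ale/2$, and controls the resulting error terms through the smallness $\|\fept\|_{L^\infty(2\ep<|x|<4\ep)}+\|\fept-1\|_{L^\infty(\ep\ale/4<|x|<\ep\ale/2)}\le C/\ln\ale$. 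You instead post-compose with a fixed one-variable ramp, $f_\ep=\chi\circ\fept$, which confines all derivatives to the level set $\fept\in[1/4,3/4]$, i.e.\ to the annulus $\ep\ale^{1/4}\le|x|\le\ep\ale^{3/4}$, well away from both kinks; the gradient and Hessian bounds then follow from the chain rule and the explicit pointwise formulas $|\na\fept|=1/(|x|\ln\ale)$, $|\na^2\fept|=\sqrt2/(|x|^2\ln\ale)$, with constants depending only on the universal template $\chi$. Your route is slightly cleaner for the derivative estimates (one fixed $\chi$ replaces two $\ep$-dependent cut-offs, and no smallness of $\fept$ near the circles is invoked), at the modest price of a cruder $L^2$ bound on $f_\ep-1$: your support is the much larger disk $|x|\le\ep\ale^{3/4}$, and you correctly reduce the required bound to the elementary inequality $\pi\ln^2\ale\le C\sqrt{\ale}$, valid on $\ale\ge100$ since $\ln^2 t/\sqrt t$ is decreasing on $[e^4,\infty)$. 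Since the rest of the paper uses only the properties listed in Lemma \ref{flem} together with the facts that $f_\ep$ vanishes on a neighborhood of $\overline{D}(0,\ep)$ and equals $1$ for $|x|\ge\ep\ale$ --- both of which your construction provides because $\ale^{1/4}\ge100^{1/4}>3$ --- your $f_\ep$ is a valid drop-in replacement.
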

\begin{proof}

From Lemma \ref{f_AB} we observe that the function 
\begin{equation*}
  \fept= f _{\ep, \ep\ale}=
\begin{cases}
0&\text{ if }|x|<\ep\\
\frac{\ln (|x|/\ep)}{\ln \ale}&\text{ if }\ep<|x|<\ep\ale\\
1&\text{ if }|x|>\ep\ale
\end{cases}
\end{equation*}
satisfies 
\begin{gather*}
\nl\infty{ \fept}=1,\quad  \nl2{\nabla \fept}\leq \frac{C}{\sqrt{\ln\ale}} ,\quad  \bigl\||x|\nabla^2 \fept\bigr\|_{L^2(\ep<|x|<\ep\ale)}\leq\frac{C}{\sqrt{\ln\ale}} 
\intertext{and} 
\nl2{ \fept -1}\leq C\frac{\ep\ale}{\ln\ale}
  \end{gather*}
so it has all the required properties except smoothness. More precisely, $\fept$ is not smooth across $|x|=\ep$ and $|x|=\ep\ale$. To obtain a smooth function $\fep$ from $\fept$ we need to cut-off in the neighborhood of these two circles. 

Let $g\in C^\infty_0(\R^2;[0,1])$ be such that $g(x)=0$ for $|x|<2$ and $g(x)=1$ for $|x|>4$. We define 
\begin{gather*}
g^1_\ep(x)=g\bigl(\frac x\ep\bigr)=
\begin{cases}
  0,&|x|<2\ep\\
1,&|x|>4\ep
\end{cases}
\intertext{and}
g^2_\ep(x)=1-g\bigl(\frac {8x}{\ep\ale}\bigr)=
\begin{cases}
  1,&|x|<\frac{\ep\ale}4\\
0,&|x|>\frac{\ep\ale}2.
\end{cases}
\end{gather*}
With the help of all the auxiliary functions above, we define a new function 
\begin{gather*}
f_\ep=1+g^2_\ep\bigl(g^1_\ep \fept-1 \bigr)=
\begin{cases}
  1,&|x|>\frac{\ep\ale}2\\
  1+g^2_\ep\bigl(\fept-1 \bigr),&\frac{\ep\ale}4<|x|<\frac{\ep\ale}2 \\
  \fept,&4\ep<|x|<\frac{\ep\ale}4\\
  g^1_\ep \fept,&2\ep<|x|<4\ep\\
  0,&|x|<2\ep.
\end{cases}
\end{gather*}
Clearly $f_\ep$ satisfies (i) and is smooth across $|x|=\ep$ and $|x|= \ep \ale$, so it remains to prove (ii). From the definition of $f_\ep$, we immediately see that $\nl\infty{ f _\ep}=1$. To simplify the write-up, we use the notation $L^p(a,b)=L^p(a<|x|<b)$. Clearly $g_\ep^1$ and $g_\ep^2$ are uniformly bounded in $L^\infty$ and $\nabla g_\ep^1$ and $\nabla g^2_\ep$ are uniformly bounded in $L^2$. Using these observations we estimate
\begin{align*}
\norm{\nabla f_\ep }_{L^{2}}
& \leq  \norm{\na \big(g^1_\ep \fept \big)}_{L^{2}(2\ep,4\ep)}
 + \norm{\na \fept}_{L^{2}(4\ep,\frac{\ep\ale}4)} + \norm{\nabla\big( g^2_\ep\bigl( \fept-1 \bigr) \big)}_{L^{2}(\frac{\ep\ale}4,\frac{\ep\ale}2)}\\
& \leq C\bigl(\norm{\na \fept}_{L^{2}} + \norm{\fept}_{L^{\infty}(2\ep,4\ep)} + \norm{\fept-1}_{L^{\infty}(\frac{\ep\ale}4,\frac{\ep\ale}2)} \bigr)\\
& \leq \frac{C}{\sqrt{\ln\ale}} + \frac{C}{\ln\ale}\\
& \leq \frac{C}{\sqrt{\ln\ale}}
\end{align*}
where we used the bounds
\begin{gather*}
\norm{\fept}_{L^{\infty}(2\ep,4\ep)} =\normm{\frac{\ln (|x|/\ep) }{\ln \ale}}_{L^{\infty}(2\ep,4\ep)} \leq \frac{C}{\ln\ale} 
\intertext{and}  
\norm{\fept-1}_{L^{\infty}(\frac{\ep\ale}4,\frac{\ep\ale}2)} =
\normm{\frac{\ln (|x|/ (\ep \ale)) }{\ln \ale} }_{L^{\infty}(\frac{\ep\ale}4,\frac{\ep\ale}2)} \leq \frac{C}{\ln\ale}.
\end{gather*}

Similarly, using in addition that $\nll\infty{|x|\nabla g^i_\ep}$ and $\nll2{|x|\nabla^2 g^i_\ep}$ are bounded independently of $\ep$ for $i=1,2$, we can estimate 
\begin{align*}
\normm{|x| \nabla^2 f_\ep }_{L^{2}}
& \leq \normm{|x| \na^2 \big(g^1_\ep \fept \big)}_{L^{2}(2\ep,4\ep)}
 + \normm{|x| \na^2 \fept}_{L^{2}(4\ep,\frac{\ep\ale}4)} \\
&\hskip 5cm+ \normm{|x|\nabla^2 \big( g^2_\ep\bigl( \fept-1 \bigr) \big)}_{L^{2}(\frac{\ep\ale}4,\frac{\ep\ale}2)}\\
& \leq C\bigl( \normm{|x| \na^2 \fept}_{L^{2}} + \norm{\na \fept}_{L^{2}} + \norm{\fept}_{L^{\infty}(2\ep,4\ep)} + \norm{\fept -1 }_{L^{\infty}(\frac{\ep\ale}4,\frac{\ep\ale}2)}\bigr)\\
& \leq \frac{C}{\sqrt{\ln\ale}} + \frac{C}{\ln\ale} \\& \leq \frac{C}{\sqrt{\ln\ale}}.
\end{align*}

Finally, 
\begin{align*}
\norm{f_\ep-1}_{L^{2}} 
& \leq  \norm{g^1_\ep \fept-1 }_{L^{2}(2\ep,4\ep)}+  \norm{ \fept-1 }_{L^{2}(4\ep,\frac{\ep\ale}4)}+  \norm{ g^2_\ep\bigl(\fept-1 \bigr)}_{L^{2}(\frac{\ep\ale}4,\frac{\ep\ale}2)} +\|1\|_{L^2(|x|<2\ep)}\\
& \leq   C \bigl(\norm{ \fept-1 }_{L^{2}} +\|1\|_{L^2(|x|<4\ep)}\bigr)\\
& \leq C\bigl( \frac{\ep \ale}{\ln \ale} + \ep \bigr) \\
&\leq C\frac{\ep \ale}{\ln \ale}.
\end{align*}
This completes the proof of the lemma.
\end{proof}

The function $\fep$ is a cut-off in the neighborhood of the disk $D(0,\ep)$. We define now  a cut-off in the neighborhood of the disk $D(h_\ep(t),\ep)$ by setting
\begin{equation*}
\etaep(t,x)=\fep(x-h_\ep(t)).  
\end{equation*}
Lemma \ref{flem} immediately implies that $\etaep$ has the following properties:
\begin{lem}\label{etaeplem}
We have that
\begin{enumerate}
\item $\etaep\in \Lip (\R_+;C^\infty_{0,\sigma})$;
\item $\etaep$ vanishes in the neighborhood of the disk $\depf$ and $\etaep=1$ for $|x-h_\ep(t)|\geq\ep\ale$;
\item there  exists a universal constant $C$ such that
  \begin{gather}
\nl\infty{ \etaep}=1,\quad  \nl2{\nabla \etaep}\leq \frac{C}{\sqrt{\ln\ale}} ,\quad  \nll2{|x-h_\ep(t)|\nabla^2\etaep}\leq\frac{C}{\sqrt{\ln\ale}}  \label{etap1}
\intertext{and} 
\nl2{\etaep-1}\leq C\frac{\ep\ale}{\ln\ale}\cdot\label{etap2}
  \end{gather}
\end{enumerate} 
\end{lem}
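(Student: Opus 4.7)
The proof plan is essentially bookkeeping: Lemma \ref{etaeplem} is obtained from Lemma \ref{flem} by the rigid change of variables $y = x - h_\ep(t)$. First, for (i), spatial smoothness of $\etaep(t,\cdot)$ is inherited from $\fep \in C^\infty(\R^2)$, while the Lipschitz-in-$t$ behavior follows from the identity
\[
\etaep(t,x) - \etaep(s,x) = \fep(x - h_\ep(t)) - \fep(x - h_\ep(s))
\]
combined with the boundedness of $\nabla\fep$ (provided by Lemma \ref{flem}) and the assumption $h_\ep \in \Lip(\R_+;\R^2)$.

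For (ii), the support statement and the ``equal to $1$'' statement are direct relabelings of the corresponding properties of $\fep$: since $\fep$ vanishes in a neighborhood of $\overline{D}(0,\ep)$, the function $\etaep(t,\cdot)$ vanishes in a neighborhood of $\overline{D}(h_\ep(t),\ep) = \depf$, and similarly the condition $\fep(y) = 1$ for $|y| \geq \ep\ale$ translates to $\etaep(t,x) = 1$ for $|x - h_\ep(t)| \geq \ep\ale$.

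For (iii), I would use translation invariance of Lebesgue measure on $\R^2$. The pointwise identities
\[
\nabla_x \etaep(t,x) = (\nabla \fep)(x - h_\ep(t)), \qquad \nabla_x^2 \etaep(t,x) = (\nabla^2 \fep)(x - h_\ep(t))
\]
show, after the substitution $y = x - h_\ep(t)$, that each quantity appearing on the left-hand side of \eqref{etap1}--\eqref{etap2} equals the corresponding quantity for $\fep$ in Lemma \ref{flem}(ii); in particular the weight $|x - h_\ep(t)|$ becomes $|y|$ and matches exactly the weighted bound on $\nabla^2 \fep$ in that lemma. Substituting the estimates of Lemma \ref{flem} then yields the four inequalities in (iii).

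There is no real obstacle in this argument: all of the analytic work has already been done in Lemma \ref{flem}, and the present lemma merely records that a rigid time-dependent translation preserves these bounds and moves the cut-off from the disk centered at the origin to the disk centered at $h_\ep(t)$. The only small point to verify carefully is that the weighted norm transforms correctly, which is immediate from $|x - h_\ep(t)| = |y|$ under the substitution.
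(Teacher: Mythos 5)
Your argument is correct and is exactly the route the paper takes: the paper simply asserts that Lemma \ref{flem} ``immediately implies'' Lemma \ref{etaeplem}, the implicit justification being precisely the translation $y=x-h_\ep(t)$, the translation invariance of Lebesgue measure, and the Lipschitz continuity of $h_\ep$ that you spell out. No discrepancy to report.
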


Given a test function  $\varphi\in C^1_b (\R_+;C^\infty_{0,\sigma})$ we construct  a test function  $\varphi_\ep$ on the set $|x-h_\ep(t)|>\ep$ by setting
\begin{equation}\label{cutof}
\varphi_{\ep}= \na^{\perp}(\eta_{\ep} \psiep)
\end{equation}
where $\psiep$ was defined in Section \ref{sectnot} (see relation \eqref{psiepdef}). 
We state some properties of $\varphi_\ep$ in the following lemma:
\begin{lem}{\label{lemmaphi}}
The test function $\varphi_\ep$ has the following properties:
\begin{enumerate}
\item $\varphi_{\ep}\in \Lip (\R_+;C^\infty_{0,\sigma})$ and is supported in the set $|x-h_\ep(t)|>\ep$;
\item $\varphi_{\ep} \to \varphi$ strongly in $L^\infty(0,T;H^1)$ as $\ep\to0$;
\item there exists a universal constant $C$ such that 
  \begin{equation}
    \label{h1phi}
\|\varphi_{\ep}\|_{L^\infty(0,T;H^{1})} \leq C \|\varphi\|_{L^\infty(0,T;H^3)}.    
  \end{equation}
\end{enumerate}
\end{lem}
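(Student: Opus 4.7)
The plan is to exploit the product-rule decomposition
\begin{equation*}
\varphi_\ep = \na^\perp(\etaep\psiep) = \etaep\,\na^\perp\psiep + \psiep\,\na^\perp\etaep = \etaep\,\varphi + \psiep\,\na^\perp\etaep,
\end{equation*}
where the last equality uses $\na^\perp\psiep=\varphi$. All three items will follow by estimating these two pieces separately with the bounds on $\etaep$ from Lemma \ref{etaeplem}, the bound on $\psiep$ from \eqref{psiep2}, and the identity $\na\psiep=-\varphi^\perp$ (so that $|\na\psiep|=|\varphi|$).

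For item (i), the divergence-free property is automatic since $\varphi_\ep$ is a curl; smoothness in $x$ and Lipschitz dependence in $t$ come from the corresponding regularity of $\etaep$ (using $h_\ep\in\Lip$) and of $\psiep$. Compact support is the one nonobvious point: $\etaep\varphi$ is compactly supported because $\varphi$ is, while $\psiep\,\na^\perp\etaep$ lives in the annulus $\ep\leq|x-h_\ep(t)|\leq\ep\ale$, which is the support of $\na\etaep$; both pieces vanish on $\depf$. For item (iii), every term appearing in $\varphi_\ep$ or $\na\varphi_\ep$ falls into one of a few patterns. The trivial contributions $\etaep\varphi$ and $\etaep\,\na\varphi$ are bounded by $\|\varphi\|_{L^2}$ and $\|\na\varphi\|_{L^2}$. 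On the support of $\na\etaep$ we have $|\psiep|\leq\ep\ale\,\|\varphi\|_{L^\infty}$ by \eqref{psiep2}, so combining this with $\|\na\etaep\|_{L^2}\leq C/\sqrt{\ln\ale}$ and $\ep\ale\leq1$ (from \eqref{ale}), the terms $\psiep\,\na^\perp\etaep$ and $(\na\psiep)\otimes\na^\perp\etaep$ are each bounded by $C\|\varphi\|_{L^\infty}/\sqrt{\ln\ale}$. The remaining term $\psiep\,\na^2\etaep$ is handled by combining the pointwise estimate $|\psiep|\leq|x-h_\ep|\,\|\varphi\|_{L^\infty}$ with the weighted bound $\||x-h_\ep|\na^2\etaep\|_{L^2}\leq C/\sqrt{\ln\ale}$. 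Invoking the 2D Sobolev embedding $H^2\hookrightarrow L^\infty$ to absorb $\|\varphi\|_{L^\infty}$ into $\|\varphi\|_{H^3}$ yields \eqref{h1phi}.

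For item (ii), I write $\varphi_\ep-\varphi = (\etaep-1)\varphi + \psiep\,\na^\perp\etaep$ and redo the above estimates. The relevant factors from Lemma \ref{etaeplem} are $\|\etaep-1\|_{L^2}\leq C\ep\ale/\ln\ale$ and $\|\na\etaep\|_{L^2}\leq C/\sqrt{\ln\ale}$, and we use the stronger conclusion $\ep\ale\to0$ from \eqref{ale}. All contributions to the $H^1$ norm of the difference carry one of the prefactors $\ep\ale/\ln\ale$, $\ep\ale/\sqrt{\ln\ale}$, or $1/\sqrt{\ln\ale}$, and therefore vanish uniformly on $[0,T]$ as $\ep\to0$, since $\|\varphi\|_{L^\infty(0,T;W^{1,\infty})}<\infty$. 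The main obstacle, such as it is, is bookkeeping: one must verify that every occurrence of the \emph{a priori} dangerous factor $\ep\ale$ is paired with at least a $1/\sqrt{\ln\ale}$ from Lemma \ref{etaeplem}. The calibration in \eqref{ale}, which ensures both $\ep\ale\leq1$ and $\ln\ale\to\infty$ (with the stronger $\ep\ale\to0$ reserved for item (ii)), is precisely what makes this go through.
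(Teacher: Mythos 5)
Your proposal is correct and follows essentially the same route as the paper: the same decomposition $\varphi_\ep=\etaep\varphi+\psiep\na^\perp\etaep$, the same pairing of the pointwise bound $|\psiep|\leq|x-h_\ep(t)|\nl\infty\varphi$ with the $L^2$ bounds on $\na\etaep$, $\etaep-1$ and the weighted bound on $\na^2\etaep$ from Lemma \ref{etaeplem}, and the same use of \eqref{ale} to control the factors $\ep\ale$. The only cosmetic difference is that you establish (iii) directly rather than reading it off the quantitative estimate obtained while proving (ii).
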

\begin{proof}
Since $\etaep$ and $\psiep$ are  $\Lip $ in time and smooth in space, so is $\varphi_\ep$. The compact support in $x$ of $\varphi_\ep$ in the set $|x-h_\ep(t)|>\ep$ follows from the compact support of $\varphi$ and the localization properties of $\etaep$. Obviously $\phiep$ is also divergence free so claim (i) follows.

Recalling that $\nabla^\perp\psiep=\varphi$  we write
\begin{equation*}
 \varphi_{\ep} - \varphi=\nabla^\perp(\etaep\psiep)-\varphi
= \nabla^\perp \etaep\psiep +\etaep \nabla^\perp\psiep-\varphi
=\nabla^\perp \etaep\psiep+(\etaep-1)\varphi.
\end{equation*}

Using the bound \eqref{psiep2} and recalling that $\nabla\etaep$ is supported in $D(h_\ep(t),\ep\ale)$ we can estimate
\begin{align*}
\norm{\varphi_{\ep} - \varphi}_{L^{2}} 
& \leq  \norm{(\eta_{\ep} -1) \varphi}_{L^{2}}+ \norm{\na \eta_{\ep} \psiep  }_{L^{2}}\\
& \leq  \norm{\eta_{\ep}-1}_{L^{2}} \norm{\varphi}_{L^{\infty}} + \norm{\na \eta_{\ep}}_{L^{2}} \norm{\psiep }_{L^{\infty}(D(h_\ep(t),\ep\ale))}\\
& \leq  \norm{\eta_{\ep}-1}_{L^{2}} \norm{\varphi}_{L^{\infty}} + \ep\ale\norm{\na \eta_{\ep}}_{L^{2}} \nl\infty\varphi\\
&= \nl\infty\varphi(\norm{\eta_{\ep}-1}_{L^{2}} +\ep\ale\norm{\na \eta_{\ep}}_{L^{2}} ).
\end{align*}
Taking the supremum on $[0,T]$ and using \eqref{ale}, \eqref{etap1} and \eqref{etap2} we deduce that
\begin{equation}\label{convl2}
\norm{\varphi_{\ep} - \varphi}_{L^\infty(0,T;L^{2})}\leq C\frac{\ep\ale}{\sqrt{\ln\ale}}\norm{\varphi}_{L^\infty([0,T]\times\R^2)}  \toep0. 
\end{equation}

Next,
\begin{align*}
\nl2 { \na (\varphi_{\ep} - \varphi)}
&= \nl2 { \na \nabla^\perp\bigl((\eta_\ep-1)\psi_\ep\bigr)}\\
&\leq \nl2{ \na \nabla^\perp\eta_\ep\psiep}+ C\nl2{\nabla\eta_\ep}\nl\infty{\nabla\psi_\ep}+\nl2{\eta_\ep-1}\nl\infty{\nabla^2\psi_\ep}.
\end{align*}
We bound the first term on the right-hand side using \eqref{psiep1} and \eqref{etap1}:
\begin{equation*}
\nl2{ \na \nabla^\perp\eta_\ep\psiep}\leq C\nl\infty\varphi\nll2{|x-h_\ep(t)|\nabla^2\etaep} 
\leq \frac{C}{\sqrt{\ln\ale}}\nl\infty\varphi.
\end{equation*}
Recalling that $\nabla^\perp\psiep=\varphi$ and using again Lemma \ref{etaeplem} we infer that
\begin{align*}
\nl2 { \na (\varphi_{\ep} - \varphi)}
&\leq \frac{C}{\sqrt{\ln\ale}}\nl\infty\varphi  + C\nl2{\nabla\eta_\ep}\nl\infty{\varphi}+ \nl2{\eta_\ep-1}\nl\infty{\nabla\varphi}\\
&\leq \frac{C}{\sqrt{\ln\ale}}\lip\varphi.
\end{align*}
Combining this bound with \eqref{convl2} implies that 
\begin{equation*}
\norm{\varphi_{\ep} - \varphi}_{L^\infty(0,T;H^1)}\leq \frac{C}{\sqrt{\ln\ale}}\norm{\varphi}_{L^\infty(0,T;W^{1,\infty})}  \toep0. 
\end{equation*}
In addition, we obtain that there exists a universal constant  $C>0$ such that
\begin{equation*}
\norm{\varphi_{\ep}}_{L^\infty(0,T;H^1)} 
\leq C\norm{\varphi}_{L^\infty(0,T;H^1\cap W^{1,\infty})}.
\end{equation*}

Using the Sobolev embedding $H^3\hookrightarrow W^{1,\infty}$ completes the proof of the lemma.
\end{proof}

We end this section with an estimate on the $H^{-1}$ norm of the time-derivative of $\phiep$.
\begin{lem}\label{timeder}
Let $w$ be an $H^1$ vector field. There exists a universal constant $C>0$ such that for all times $t\geq0$ where $h_\ep$ is differentiable we have that
\begin{multline*}
  \bigl|\int_{\R^2}w(x)\cdot\bigl(\pat\phiep(t,x)-\pat\varphi(t,x)\bigr)dx\bigr|\leq 
C\nl2{\curl w}\bigl(\frac{\ep^2\ale^2}{\ln\ale}\nl\infty{\pat\varphi(t,\cdot)}\\
+\frac{\ep\ale}{\sqrt{\ln\ale}}|h'_\ep(t)|\nl\infty{\varphi(t,\cdot)} \bigr).
\end{multline*}
\end{lem}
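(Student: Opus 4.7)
The plan is to rewrite $\pat\phiep-\pat\varphi$ as a perpendicular gradient of an \emph{explicit, compactly supported scalar function}, then integrate by parts against $w$ and apply Cauchy--Schwarz.

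First I would use the identity $\nabla^\perp\psiep=\varphi$ to rewrite
\begin{equation*}
  \phiep-\varphi=\nabla^\perp(\etaep\psiep)-\nabla^\perp\psiep=\nabla^\perp\bigl((\etaep-1)\psiep\bigr).
\end{equation*}
Since $\etaep-1$ is supported in the disk $D(h_\ep(t),\ep\ale)$, the scalar inside $\nabla^\perp$ is compactly supported in $x$ and Lipschitz in $t$ (recall $\etaep(t,x)=\fep(x-h_\ep(t))$ and $h_\ep\in \Lip$). Differentiating in time at a point where $h_\ep$ is differentiable gives
\begin{equation*}
  \pat\phiep-\pat\varphi=\nabla^\perp g,\qquad g\equiv \pat\etaep\,\psiep+(\etaep-1)\,\pat\psiep,
\end{equation*}
and $g$ is still supported in $D(h_\ep(t),\ep\ale)$.

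Next I would integrate by parts. Using the elementary identity $\int w\cdot\nabla^\perp g\,dx=-\int (\curl w)\, g\,dx$ (valid since $g$ is compactly supported and $w\in H^1$), Cauchy--Schwarz yields
\begin{equation*}
  \Bigl|\int_{\R^2}w\cdot(\pat\phiep-\pat\varphi)\,dx\Bigr|\leq \nl2{\curl w}\,\nl2{g}.
\end{equation*}
So the whole problem reduces to bounding $\nl2{g}$ by the two terms on the right-hand side of the claimed inequality.

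For the first piece of $g$, note that $\pat\etaep=-h'_\ep(t)\cdot(\nabla\etaep)$, so $|\pat\etaep|\leq|h'_\ep(t)|\,|\nabla\etaep|$; since $\nabla\etaep$ is supported in $D(h_\ep(t),\ep\ale)$, the bound \eqref{psiep2} gives $|\psiep|\leq\ep\ale\nl\infty\varphi$ on that support, and Lemma \ref{etaeplem} provides $\nl2{\nabla\etaep}\leq C/\sqrt{\ln\ale}$. Multiplying yields the term $\frac{C\ep\ale}{\sqrt{\ln\ale}}|h'_\ep(t)|\nl\infty{\varphi(t,\cdot)}$. For the second piece, $\etaep-1$ is supported in $D(h_\ep(t),\ep\ale)$ with $\nl2{\etaep-1}\leq C\ep\ale/\ln\ale$ by \eqref{etap2}, and \eqref{psiep3} furnishes the $L^\infty$ bound $\nl\infty{\pat\psiep}\leq \ep\ale\nl\infty{\pat\varphi}+|h'_\ep(t)|\nl\infty\varphi$ on that disk; multiplying produces the term $\frac{C\ep^2\ale^2}{\ln\ale}\nl\infty{\pat\varphi(t,\cdot)}$ plus a contribution $\frac{C\ep\ale}{\ln\ale}|h'_\ep(t)|\nl\infty\varphi$, which is dominated by the $\frac{\ep\ale}{\sqrt{\ln\ale}}|h'_\ep(t)|\nl\infty\varphi$ term already obtained. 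Summing the two pieces gives exactly the stated estimate. No real obstacle arises; the only conceptual step is recognising that $\phiep-\varphi$ — not $\phiep$ itself — is a $\nabla^\perp$ of a compactly supported scalar, which is what makes the integration by parts clean and keeps the nonintegrable far-field of $\psi$ from appearing.
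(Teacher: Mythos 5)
Your proposal is correct and follows essentially the same route as the paper: both write $\phiep-\varphi=\nabla^\perp\bigl((\etaep-1)\psiep\bigr)$, integrate by parts against $w$ to bring out $\curl w$, split the time derivative into $\pat\etaep\,\psiep+(\etaep-1)\pat\psiep$, and estimate the two pieces with \eqref{psiep2}, \eqref{psiep3}, \eqref{etap2} and the gradient bound of Lemma \ref{etaeplem}. The only cosmetic difference is that you apply Cauchy--Schwarz once to $\nl2{g}$ while the paper estimates the two integrals separately; the resulting bounds are identical.
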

\begin{proof}
Let $t$ be a time where $h_\ep$ is differentiable. 
We use \eqref{cutof} to write
\begin{align*}
 \int_{\R^2}w(x)\cdot\bigl(\pat\phiep(t,x)-\pat\varphi(t,x)\bigr)dx
&=    \int_{\RR^{2}}  w  \cdot \pat \nabla^\perp\bigl((\eta_\ep-1)\psi_{\ep }\bigr)\\
&=   
-  \int_{\RR^{2}} \curl w \,  \pat \bigl((\eta_\ep-1)\psi_{\ep }\bigr)\\
&=  -  \int_{\RR^{2}} \curl w\,  \pat \eta_\ep\psi_{\ep }-  \int_{\RR^{2}} \curl w\,   (\eta_\ep-1)\pat\psi_{\ep }.
\end{align*}

Clearly
\begin{equation*}
  \pat\eta_\ep=\pat(f_\ep(x-h_\ep(t)))=-h'_\ep(t)\cdot\nabla f_\ep(x-h_\ep(t))
\end{equation*}
is supported in the set $\{|x-h_\ep(t)|\leq \ep\ale\}$. 
We can therefore bound
\begin{align*}
  \Bigl| \int_{\RR^{2}} \curl w\, \pat \eta_\ep\psi_{\ep }\Bigr|
&\leq C|h_\ep'(t)| \int_{|x-h_\ep(t)|\leq \ep\ale} |\curl w||\nabla f_\ep(x-h_\ep(t))||\psiep|\\
&\leq C |h_\ep'(t)|\nl2{\curl w}\nl2{\nabla f_\ep}\norm{\psiep}_{L^{\infty}(D(h_\ep(t),\ep\ale))}\\
&\leq C\frac{\ep\ale}{\sqrt{\ln\ale}} |h_\ep'(t)|\nl2{\curl w}\nl\infty\varphi
\end{align*}
where we used \eqref{psiep2} and Lemma \ref{flem}.

Similarly,  $\eta_\ep-1$  is supported in the set $\{|x-h_\ep(t)|\leq \ep\ale\}$ so we can use \eqref{psiep3} and \eqref{etap2} to deduce that
\begin{align*}
  \bigl| \int_{\RR^{2}} \curl w\,  (\eta_\ep-1)\pat\psi_{\ep }\bigr|
&\leq \nl2{\curl w}\nl2{\etaep-1} \norm{\pat\psiep}_{L^{\infty}(D(h_\ep(t),\ep\ale))}   \\
&\leq C\frac{\ep\ale}{\ln\ale}\nl2{\curl w}(\ep\ale\nl\infty{\pat\varphi}+|h'_\ep(t)|\nl\infty{\varphi}).
\end{align*}
The conclusion follows putting together the above relations.
\end{proof}

\section{Temporal estimate and strong convergence}
\label{temp}

The aim of this section is to prove the strong convergence of some sub-sequence of $\vep$. More precisely, we will prove the following result.
\begin{lem}\label{lemstrong}
There exists  a sub-sequence $\vepk$ of $\vep$ which converges strongly in $L^2(0,T;L^2\loc)$. 
\end{lem}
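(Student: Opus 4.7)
The plan is to establish uniform-in-$\ep$ temporal equicontinuity of $t\mapsto \int v_\ep(t,x)\varphi(x)\,dx$ for $\varphi\in C^\infty_{0,\sigma}$, and then upgrade this to strong compactness in $L^2(0,T;L^2\loc)$ using the uniform $L^2(0,T;H^1)$ bound together with a classical Aubin-Lions type argument. To derive the temporal estimate, I fix a time-independent $\varphi\in C^\infty_{0,\sigma}$ and associate the modified test function $\varphi_\ep$ via~\eqref{cutof}. Since $\varphi_\ep(t,\cdot)$ is divergence-free, smooth and compactly supported in $\{|x-h_\ep(t)|>\ep\}$, it is admissible in \eqref{nsep}, the pressure drops out, and the chain rule produces, for $0\le s\le t\le T$,
\begin{equation*}
\int v_\ep(t)\cdot\varphi_\ep(t)\,dx-\int v_\ep(s)\cdot\varphi_\ep(s)\,dx=\int_s^t\!\int_{\R^2}\bigl[v_\ep\cdot\partial_\tau\varphi_\ep-\nu\nabla v_\ep:\nabla\varphi_\ep-(v_\ep\cdot\nabla)v_\ep\cdot\varphi_\ep\bigr]dx\,d\tau.
\end{equation*}

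I then bound each term on the right. The viscous contribution is controlled by Cauchy-Schwarz in time and \eqref{h1phi}. For the nonlinear term I use the pointwise bound $\nl\infty{\varphi_\ep}\le C\nh3\varphi$, which follows from the decomposition $\varphi_\ep=\eta_\ep\varphi+\nabla^\perp\eta_\ep\,\psi_\ep$ combined with the estimates $|\nabla\eta_\ep|\le C/(|x-h_\ep|\ln\ale)$ pointwise in the annulus and $|\psi_\ep|\le|x-h_\ep|\nl\infty\varphi$ from~\eqref{psiep1}; then $|\int (v_\ep\cdot\nabla)v_\ep\cdot\varphi_\ep|\le \nl\infty{\varphi_\ep}\nl2{v_\ep}\nl2{\nabla v_\ep}$ and Cauchy-Schwarz in $\tau$ produce a $(t-s)^{1/2}$ bound. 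The $\partial_\tau\varphi_\ep$ term is precisely what Lemma~\ref{timeder} was designed for, applied with $w=v_\ep$; since $\varphi$ is time-independent only the $|h'_\ep|$ piece contributes, and~\eqref{ale} forces the prefactor $\ep\ale|h'_\ep|/\sqrt{\ln\ale}$ to vanish uniformly on $[0,T]$. Finally the closeness $\nl2{\varphi_\ep-\varphi}=O(\ep\ale/\sqrt{\ln\ale})\to 0$ from Lemma~\ref{lemmaphi} lets me replace $\varphi_\ep$ by $\varphi$ at the two endpoints up to an $\omega(\ep)$ error. Summing the contributions I arrive at
\begin{equation*}
\bigl|\int(v_\ep(t)-v_\ep(s))\cdot\varphi\,dx\bigr|\le C(\varphi)\bigl((t-s)^{1/2}+\omega(\ep)\bigr),
\end{equation*}
where $\omega(\ep)\to 0$ as $\ep\to 0$; this is the desired equicontinuity.

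With this at hand, the uniform $L^\infty(0,T;L^2)$ bound, a countable dense family in $C^\infty_{0,\sigma}$, and Ascoli-Arzel\`a with a diagonal extraction produce a subsequence $\vepk$ and some $v\in L^\infty(0,T;L^2)$ such that $\vepk(t)\rightharpoonup v(t)$ weakly in $L^2(\R^2)$ for every $t\in[0,T]$. On any fixed ball $B\subset\R^2$ the dual Rellich compactness $L^2(B)\hookrightarrow\hookrightarrow H^{-1}(B)$ promotes pointwise weak convergence to pointwise strong convergence in $H^{-1}(B)$, and dominated convergence (using the uniform $L^2$ bound) gives $\vepk\to v$ strongly in $L^2(0,T;H^{-1}(B))$. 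Combining this with the uniform bound in $L^2(0,T;H^1(B))$ through the interpolation $\nl2 w^{2}\le C\nh1 w\,\|w\|_{H^{-1}(B)}$ (applied after localisation by a smooth cutoff) and Cauchy-Schwarz in $t$ yields strong convergence in $L^2(0,T;L^2(B))$. Since $B$ is arbitrary, Lemma~\ref{lemstrong} follows by a standard exhaustion argument.

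I expect the hard part to be the $\partial_\tau\varphi_\ep$ contribution: because there is no uniform control on the position $h_\ep(t)$, the time derivative of $\varphi_\ep$ is not \emph{a priori} small. Lemma~\ref{timeder} reduces this term to the product $\ep\ale|h'_\ep|/\sqrt{\ln\ale}$, and the delicate calibration of $\ale$ in~\eqref{ale} --- designed precisely so that $\ep\ale(1+|h'_\ep|)$ vanishes uniformly in $t\in[0,T]$ --- is exactly what is needed for this factor to be negligible. Without this careful choice of scale, already large compared to $\ep$ but small compared to $1/|h'_\ep|$, the temporal estimate could not be closed.
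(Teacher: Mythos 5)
Your proof is correct and follows essentially the same route as the paper: the same modified test function $\varphi_\ep$, the same use of Lemma~\ref{timeder} together with the calibration \eqref{ale} to handle the $\partial_\tau\varphi_\ep$ term, and the same Ascoli-plus-interpolation conclusion. The only (harmless) bookkeeping differences are that the paper packages $\int_{\R^2} \vep\cdot\varphi_\ep$ into distributions $\xiep(t)\in H^{-2}_\sigma$ so as to obtain genuine H\"older equicontinuity and compares $\xiep$ with $\vep$ only at the very end, bounds the nonlinear term by integration by parts and Gagliardo--Nirenberg rather than by your pointwise bound on $\varphi_\ep$ (which is valid but requires the pointwise estimate $|\nabla\etaep|\leq C/(|x-h_\ep|\ln\ale)$, not stated explicitly in Lemma~\ref{flem} though it follows from the construction), and interpolates through $H^{-4}\loc$ instead of $H^{-1}(B)$.
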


To prove this lemma  we first show some time-derivative estimates and then use the Ascoli theorem.

Let $\varphi \in C^\infty_{0,\sigma}(\RR^{2})$ be a test function which does not depend on the time. Even though $\varphi$ does not depend on $t$, we can still perform the construction of the cut-off $\varphi_{\ep}$ as in Section \ref{defcutof} (see relation \eqref{cutof}) and all the results of that section remain valid. Observe that even though $\varphi$ does not depend on the time, the modified test function $\varphi_\ep$ is time-dependent. 

Let us denote by $H^s_\sigma$ the space of $H^s$ divergence free vector fields on $\R^2$. We endow  $H^s_\sigma$ with the $H^s$ norm. The dual space of $H^s_\sigma$ is $H^{-s}_\sigma$.  We have that $C^\infty_{0,\sigma}$ is dense in $H^s_\sigma$ for all $s\in\R$.

Let $t\in[0,T]$ be fixed. We use Lemma \ref{etaeplem} and relation \eqref{psiep2} to bound
\begin{align*}
\Bigl|\int_{\R^2}\vep(t,x)\cdot\varphi_\ep(t,x)\,dx\Bigr|
&= \Bigl|\int_{\R^2}\vep\cdot\nabla^\perp(\eta_\ep\psi_\ep)\,dx\Bigr|\\
& =\Bigl|\int_{\R^2}\vep\cdot(\nabla^\perp\eta_\ep\psi_\ep+\eta_\ep\varphi)\,dx\Bigr|\\
&\leq \nl2\vep\nl2{\nabla\eta_\ep} \norm{\psiep }_{L^{\infty}(D(h_\ep(t),\ep\ale))}+\nl2\vep\nl\infty{\eta_\ep}\nl2\varphi\\
&\leq C\frac{\ep\ale}{\sqrt{\ln\ale}}\nl2\vep\nl\infty\varphi   +\nl2\vep\nl2\varphi\\
&\leq K_1\nh2\varphi
\end{align*}
for some constant $K_1$ independent of $\ep$ and $t$. We used above the Sobolev embedding $H^2\hookrightarrow L^\infty$, the boundedness of $\vep$ in $L^\infty(0,T;L^2)$ and relations \eqref{ale} and \eqref{etap1}.
We infer that, for fixed $t$, the map
\begin{equation*}
C^\infty_{0,\sigma}\ni\varphi\mapsto \int_{\R^2}\vep(t,x)\cdot\varphi_\ep(t,x)\,dx\in\R  
\end{equation*}
is linear and continuous for the $H^2$ norm. Since the closure of $C^\infty_{0,\sigma}$ for the $H^2$ norm is $H^2_\sigma$, the above map can be uniquely extended to a continuous linear mapping from $H^2_\sigma$ to $\R$. Therefore it can be identified to an element of the dual of  $H^2_\sigma$ which is $H^{-2}_\sigma$. We conclude that there exists some $\xiep(t)\in H^{-2}_\sigma$ such that
\begin{equation*}
\langle \xiep(t),\varphi\rangle= \int_{\R^2}\vep(t,x)\cdot\varphi_\ep(t,x)\,dx\qquad\forall\varphi \in H^2_\sigma. 
\end{equation*}
Above $\langle\cdot,\cdot\rangle$ denotes the duality bracket between $H^{-2}_\sigma$ and $H^2_\sigma$ which is the extension of the usual $L^2$ scalar product. In addition, we have that $\nh{-2}{\xiep(t)}\leq K_1$, so $\xiep$ belongs to the space $L^\infty(0,T;H^{-2}_\sigma)$ and is bounded independently of $\ep$ in this space.

Because $\varphi_\ep$ is compactly supported in $\{|x-h_\ep(t)|>\ep\}$ it can be used as test function in \eqref{nsep}. Multiplying \eqref{nsep} by $\varphi_\ep$ and integrating in space and time from $s$ to $t$ yields
\begin{equation*}
\int_s^t\int_{\R^2}\partial_\tau\vep\cdot\varphi_\ep+ \nu\int_s^t\int_{\R^2}\nabla\vep:\nabla\varphi_\ep+ \int_s^t\int_{\R^2}\vep\cdot\nabla\vep\cdot\varphi_\ep=0.
\end{equation*}

We integrate by parts in time the first term above:
\begin{align*}
\int_s^t\int_{\R^2}\partial_\tau\vep\cdot\varphi_\ep
&=\int_{\R^2}  \vep(t,x)\cdot\varphi_\ep(t,x)\,dx-\int_{\R^2}  \vep(s,x)\cdot\varphi_\ep(s,x)\,dx -\int_s^t\int_{\R^2}\vep\cdot\partial_\tau\varphi_\ep\\
&=\langle \xiep(t)-\xiep(s),\varphi\rangle-\int_s^t\int_{\R^2}\vep\cdot\partial_\tau\varphi_\ep.
\end{align*}

We deduce that
\begin{equation}\label{xidif}
\langle \xiep(t)-\xiep(s),\varphi\rangle=  \int_s^t\int_{\R^2}\vep\cdot\partial_\tau\varphi_\ep
-\nu\int_s^t\int_{\R^2}\nabla\vep:\nabla\varphi_\ep
-\int_s^t\int_{\R^2}\vep\cdot\nabla\vep\cdot\varphi_\ep.
\end{equation}

We bound first
\begin{align*}
 \bigl|\nu \int_{s}^{t} \int_{\RR^{2}} \na \vep :\na \varphi_{\ep}\bigr|
&\leq \nu \int_{s}^{t} \norm{\na \vep}_{L^{2}} \norm{\na \varphi_{\ep}}_{L^{2}} \\ 
& \leq C\nu(t-s)^{\frac{1}{2}} \norm{\varphi}_{H^{3}}  \norm{\na \vep}_{L^{2}([0,T]\times\RR^{2})} \\ 
& \leq K(t-s)^{\frac{1}{2}}\nh3\varphi
\end{align*}
where we used \eqref{h1phi} and the hypothesis that $\vep$ is bounded in $L^2(0,T;H^1)$.

To estimate the last term in \eqref{xidif} we use the Gagliardo-Nirenberg inequality $\nl4f\leq C\nl2f^{\frac12}\nl2{\nabla f}^{\frac12}$, the boundedness of $\vep$ in the space displayed in \eqref{vepspace} and relation \eqref{h1phi}:
\begin{align*}
\bigl|\int_{s}^{t} \int_{\RR^{2}} \vep \cdot \na \vep \cdot \varphi_{\ep} \bigl|
&=|\int_{s}^{t} \int_{\RR^{2}} \vep \cdot \na \phiep \cdot \vep \bigl|\\
&\leq \int_{s}^{t} \nl4\vep^2\nl2{\nabla\phiep} \\
&\leq\int_s^t \nl2\vep\nl2{\nabla\vep}\nh1\phiep\\
&\leq C(t-s)^{\frac12}\|\vep\|_{L^\infty(0,T;L^2)}\|\nabla \vep\|_{L^2([0,T]\times\R^2)}\nh3\varphi\\
&\leq K(t-s)^{\frac12}\nh3\varphi.
\end{align*}

It remains to estimate the first term on the right-hand side of \eqref{xidif}. To do that, we use Lemma \ref{timeder}. Recalling that $\varphi$ does not depend on the time, we can write
\begin{align*}
\bigl|\int_s^t\int_{\R^2}\vep\cdot\partial_\tau\varphi_\ep\bigr|
&\leq C\frac{\ep\ale}{\sqrt{\ln\ale}}\int_s^t \nl2{\omep}
|h'_\ep|\nl\infty{\varphi} \\
&\leq C\frac{\ep\ale}{\sqrt{\ln\ale}}\nh2\varphi\int_s^t \nl2\omep|h'_\ep|\\
&\leq C(t-s)^{\frac12}\nh2\varphi\frac{\ep\ale}{\sqrt{\ln\ale}}\sup_{[0,T]}|h'_\ep|\|\omep\|_{L^2([0,T]\times\R^2)}.
\end{align*}

Due to the hypothesis imposed on $\ale$, see \eqref{ale}, we know that $\frac{\ep\ale}{\sqrt{\ln\ale}}\sup_{[0,T]}|h'_\ep|$ goes to 0 as $\ep\to0$. In particular it is bounded uniformly in $\ep$. 

Recalling again the boundedness of $\vep$ in the space $L^2(0,T;H^1)$, we infer from the above relations that 
\begin{equation*}
\bigl|\langle \xiep(t)-\xiep(s),\varphi\rangle\bigr|
\leq K(t-s)^{\frac{1}{2}} \nh3\varphi  
\end{equation*}
where the constant $K$ does not depend on $\ep$ and $\varphi$.
By density of  $C^\infty_{0,\sigma}$ in $H^3_\sigma$ we infer that $\nh{-3}{\xiep(t)-\xiep(s)}\leq K(t-s)^{\frac{1}{2}}$. The functions $\xiep(t)$ are therefore equicontinuous in time with values in $H^{-3}_\sigma$. They are also bounded in  $H^{-3}_\sigma$ because we already know that they are bounded in  $H^{-2}_\sigma$. Since the embedding $H^{-3}\hookrightarrow H^{-4}\loc$ is compact, the Ascoli theorem implies that there exists a subsequence $\xiepk$  of $\xiep$ which converges strongly in $C^0([0,T];H^{-4}\loc)$.

Recalling the definition of $\xiep$ and using Lemma \ref{etaeplem} we can write 
\begin{align*}
|\langle \xiep(t)-\vep(t),\varphi\rangle| 
&=\bigl|\int_{\R^2}\vep\cdot(\nabla^\perp\eta_\ep\psi_\ep+\eta_\ep\varphi)\,dx-\int_{\R^2}v_\ep\cdot\varphi \bigr|\\  
&=\bigl|\int_{\R^2}\vep\cdot(\nabla^\perp\eta_\ep\psi_\ep+(\eta_\ep-1)\varphi)\,dx\bigr|\\
&\leq C\nl2\vep\nl2{\nabla\eta_\ep}\norm{\psiep }_{L^{\infty}(D(h_\ep(t),\ep\ale))}+C\nl2\vep\nl2{\eta_\ep-1}\nl\infty\varphi\\
&\leq C\nl2\vep\nl\infty\varphi\bigl(\frac{\ep\ale}{\sqrt{\ln\ale}}+\frac{\ep\ale}{\ln\ale}\bigr)\\
&\leq  C\nl2\vep\nh2\varphi \frac{\ep\ale}{\sqrt{\ln\ale}}.
\end{align*}

Hence
\begin{equation*}
  \nh{-2}{\xiep(t)-\vep(t)}\leq C\nl2\vep\frac{\ep\ale}{\sqrt{\ln\ale}}\toep0
\end{equation*}
uniformly in time. Recalling that $\xiepk$ converges strongly in $H^{-4}\loc$ uniformly in time we infer that $\vepk$ also converges strongly in $L^\infty(0,T;H^{-4}\loc)$. The interpolation inequality $\nl2\cdot\leq\nh{-4}\cdot^{\frac1{5}}\nh1{\cdot}^{\frac4{5}}$ and the boundedness of $\vep$ in $L^2(0,T;H^1)$ finally imply that $\vepk$ converges strongly in $L^{\frac52}(0,T;L^2\loc)\hookrightarrow L^2(0,T;L^2\loc)$. This completes the proof of Lemma \ref{lemstrong}.

\section{Passing to the limit}
\label{paslim}

In this section we complete the proof of Theorem \ref{mainthm}. It is now only a matter of putting together the results proved in the previous sections.

Given the boundedness of $\vep$ in $L^\infty(0,T;L^2)\cap L^2(0,T;H^1)$ and Lemma \ref{lemstrong}, we know that there exists some $v\in L^\infty(0,T;L^2)\cap L^2(0,T;H^1)$ and some sub-sequence $\vepk$ such that
\begin{gather}
\vepk\rightharpoonup v \quad\text{weak$\ast$ in } L^\infty(0,T;L^2)\notag\\
\vepk\rightharpoonup v \quad\text{weakly in } L^2(0,T;H^1)\label{27}\\
\intertext{and}
\vepk\to v \quad\text{strongly in } L^2(0,T;L^2\loc). \label{28}
\end{gather}

Let $\varphi \in C_{0}^{\infty}([0,T) \times \RR^{2})$ be a divergence-free vector field. We construct $\varphi_{{\ep_k} }$ as in Section \ref{defcutof}, see relation \eqref{cutof}. Since $\varphi_{\ep_k}$ is compactly supported in the set $\{|x-h_{\ep_k}(t)|>{\ep_k}\}$, we can use it as test function in \eqref{nsep} written  for $\ep_k$. We multiply \eqref{nsep} by $\varphi_{\ep_k}$ and integrate by parts in time and space to obtain that
\begin{multline}\label{equality}
-\int_{0}^{T} \int_{\RR^{2}} \vepk \cdot \pat \varphi_{{\ep_k} } + \nu \int_{0}^{T} \int_{\RR^{2}} \na \vepk : \na \varphi_{{\ep_k} } + \int_{0}^{T} \int_{\RR^{2}} \vepk \cdot \na \vepk \cdot \varphi_{{\ep_k} } \\
= \int_{\RR^{2}} \vepk(0)  \cdot\varphi_{{\ep_k} }(0). 
\end{multline}

We will pass to the limit ${\ep_k}\to0$ in each of the terms above.

First, we know by hypothesis that $\vepk(0)\rightharpoonup v_0$ weakly in $L^2$. From Lemma \ref{lemmaphi} we also have that $\varphi_{{\ep_k} } (0) \rightarrow \varphi(0)$ strongly in $L^{2}$, so 
\begin{equation}{\label{converge3}}
\int_{\RR^{2}} v_{{\ep_k} }(0) \cdot \varphi_{{\ep_k} }(0)\toepk \int_{\RR^{2}} v(0)\cdot \varphi(0).
\end{equation}

Next, we also know from Lemma \ref{lemmaphi} that $\na \varphi_{{\ep_k} } \rightarrow \na \varphi$ strongly in $L^{2}([0,T] \times \RR^{2})$. Given that $\nabla\vepk\rightharpoonup\nabla v$ weakly in  $L^{2}([0,T]\times \RR^{2})$, see relation \eqref{27}, we infer that
\begin{equation}{\label{converge2}}
\int_{0}^{T}  \int_{\RR^{2}} \na \vepk : \na \varphi_{{\ep_k} } \toepk \int_{0}^{T}  \int_{\RR^{2}} \na v : \na \varphi.
\end{equation}

The nonlinear term also passes to the limit quite easily. We decompose
\begin{equation*}
\int_{0}^{T} \int_{\RR^{2}} \vepk \cdot \na \vepk \cdot \phiepk
= \int_{0}^{T} \int_{\RR^{2}} \vepk \cdot \na \vepk \cdot \varphi
+\int_{0}^{T} \int_{\RR^{2}} \vepk \cdot \na \vepk \cdot (\phiepk-\varphi).
\end{equation*}

We know from \eqref{28} that $\vepk\to v$ strongly in $L^2(0,T;L^2\loc)$, from \eqref{27} that $\nabla\vepk\rightharpoonup\nabla v$ weakly in $L^2(0,T;L^2)$. Recalling that $\varphi$ is compactly supported and since we obviously have that $\varphi$ is uniformly bounded in space and time we can pass to the limit in the first term on the right-hand side:
\begin{equation*}
\int_{0}^{T} \int_{\RR^{2}} \vepk \cdot \na \vepk \cdot \varphi
\toepk\int_{0}^{T} \int_{\RR^{2}} v \cdot \na v \cdot \varphi.   
\end{equation*}

To pass to the limit in the second term we  make an integration by parts and use the Hölder inequality, the Gagliardo-Nirenberg inequality $\nl4f\leq C\nl2f^{\frac12}\nl2{\nabla f}^{\frac12}$ and Lemma \ref{lemmaphi}
\begin{align*}
\bigl|\int_{0}^{T} \int_{\RR^{2}} \vepk \cdot \na \vepk \cdot (\phiepk-\varphi) \bigr|
&= \bigl|\int_{0}^{T} \int_{\RR^{2}} \vepk \otimes \vepk : \nabla(\phiepk-\varphi) \bigr| \\
&\leq\int_0^T\nl4\vepk^2\nl2{\nabla(\phiepk-\varphi)}\\
&\leq C\int_0^T\nl2\vepk\nl2{\nabla\vepk}\nl2{\nabla(\phiepk-\varphi)}\\
&\leq CT^{\frac12}\|\vepk\|_{L^\infty(0,T;L^2)}\|\vepk\|_{L^2(0,T;H^1)}\|\phiepk-\varphi\|_{L^\infty(0,T;H^1)}\\
&\toepk0.
\end{align*}
 We infer that
 \begin{equation}\label{converge1}
\int_{0}^{T} \int_{\RR^{2}} \vepk \cdot \na \vepk \cdot \phiepk
\toepk\int_{0}^{T} \int_{\RR^{2}} v \cdot \na v \cdot \varphi.    
 \end{equation}

The last term we need to pass to the limit is the term with the time-derivative. Thanks to Lemma \ref{timeder} we can bound
\begin{align*}
  \bigl|\int_0^T\int_{\R^2}\vepk\cdot\bigl(\pat\phiepk-\pat\varphi\bigr)dx\bigr|
&\leq C\int_0^T\nl2{\curl \vepk}\bigl(\frac{{\ep_k}^2\alek^2}{\ln\alek}\nl\infty{\pat\varphi}+\frac{{\ep_k}\alek}{\sqrt{\ln\alek}}|h'_{\ep_k}(t)|\nl\infty{\varphi} \bigr)\\
&\hskip-3cm \leq CT^{\frac12}\|\vepk\|_{L^2(0,T;H^1)}\|\varphi\|_{W^{1,\infty}(0,T;L^\infty)}\max\bigl(\frac{{\ep_k}^2\alek^2}{\ln\alek},\frac{{\ep_k}\alek}{\sqrt{\ln\alek}} |h_{\ep_k}'(t)|\bigr)\\
&\hskip-3cm  \toepk0
\end{align*}
where we used \eqref{ale}. But we also have that
\begin{equation*}
\int_{0}^{T} \int_{\RR^{2}} \vepk \cdot \pat \varphi\toepk  \int_{0}^{T} \int_{\RR^{2}} v \cdot \pat \varphi 
\end{equation*}
so we can conclude that
\begin{equation}{\label{converge4}}
\int_{0}^{T} \int_{\RR^{2}} \vepk \pat \varphi_{{\ep_k} } \toepk \int_{0}^{T} \int_{\RR^{2}} v \cdot \pat \varphi .
\end{equation}
Gathering \eqref{equality}, \eqref{converge3}, \eqref{converge2}, \eqref{converge1} and \eqref{converge4}, we conclude that
\begin{equation*}
-\int_{0}^{T} \int_{\RR^{2}} v \cdot \partial_{t}\varphi + \nu \int_{0}^{T} \int_{\RR^{2}} \nabla v :\nabla \varphi + \int_{0}^{T} \int_{\RR^{2}} v\cdot \nabla v\cdot \varphi = \int_{\RR^{2}} v(0) \cdot\varphi(0)
\end{equation*}
which is the weak formulation of Navier-Stokes equations in $\RR^{2}$. This completes the proof of Proposition \ref{prop}.

\section*{Acknowledgments}   J.H. and D.I. have been partially funded by the ANR project Dyficolti ANR-13-BS01-0003-01. D.I. has been  partially funded by the LABEX MILYON (ANR-10-LABX-0070) of Universit\'e de Lyon, within the program ``Investissements d'Avenir'' (ANR-11-IDEX-0007) operated by the French National Research Agency (ANR).

\bigskip

\begin{description}
\item[J. He] Universit\'e de Lyon, Universit\'e Lyon 1 --
CNRS UMR 5208 Institut Camille Jordan --
43 bd. du 11 Novembre 1918 --
Villeurbanne Cedex F-69622, France.\\
Email: \texttt{jiao.he@math.univ-lyon1.fr}
\item[D. Iftimie] Universit\'e de Lyon, Universit\'e Lyon 1 --
CNRS UMR 5208 Institut Camille Jordan --
43 bd. du 11 Novembre 1918 --
Villeurbanne Cedex F-69622, France.\\
Email: \texttt{iftimie@math.univ-lyon1.fr}
\end{description}


\begin{thebibliography}{10}

\bibitem{chipot_limits_2014}
M.~Chipot, G.~Planas, J.~C. Robinson, and W.~Xue.
\newblock Limits of the {Stokes} and {Navier}-{Stokes} equations in a punctured
  periodic domain.
\newblock {\em arXiv:1407.6942 [math]}, 2014.

\bibitem{desjardins_existence_1999}
B.~Desjardins and M.~J. Esteban.
\newblock Existence of weak solutions for the motion of rigid bodies in a
  viscous fluid.
\newblock {\em Archive for Rational Mechanics and Analysis}, 146(1):59--71,
  1999.

\bibitem{ervedoza_long-time_2014}
S.~Ervedoza, M.~Hillairet, and C.~Lacave.
\newblock Long-{Time} {Behavior} for the {Two}-{Dimensional} {Motion} of a
  {Disk} in a {Viscous} {Fluid}.
\newblock {\em Communications in Mathematical Physics}, 329(1):325--382, 2014.

\bibitem{glass_motion_2014-1}
O.~Glass, C.~Lacave, and F.~Sueur.
\newblock On the motion of a small body immersed in a two dimensional
  incompressible perfect fluid.
\newblock {\em Bulletin de la Société Mathématique de France},
  142(3):489--536, 2014.

\bibitem{glass_motion_2016}
O.~Glass, C.~Lacave, and F.~Sueur.
\newblock On the {Motion} of a {Small} {Light} {Body} {Immersed} in a {Two}
  {Dimensional} {Incompressible} {Perfect} {Fluid} with {Vorticity}.
\newblock {\em Communications in Mathematical Physics}, 341(3):1015--1065,
  2016.

\bibitem{glass_point_2018}
O.~Glass, A.~Munnier, and F.~Sueur.
\newblock Point vortex dynamics as zero-radius limit of the motion of a rigid
  body in an irrotational fluid.
\newblock {\em Inventiones mathematicae}, 214(1):171--287, 2018.

\bibitem{hoffmann_motion_1999}
K.-H. Hoffmann and V.~N. Starovoitov.
\newblock On a motion of a solid body in a viscous fluid. {Two}-dimensional
  case.
\newblock {\em Advances in Mathematical Sciences and Applications},
  9(2):633--648, 1999.

\bibitem{iftimie_two_2003}
D.~Iftimie, M.~C. Lopes~Filho, and H.~J. Nussenzveig~Lopes.
\newblock Two {Dimensional} {Incompressible} {Ideal} {Flow} {Around} a {Small}
  {Obstacle}.
\newblock {\em Communications in Partial Differential Equations},
  28(1-2):349--379, 2003.

\bibitem{iftimie_two-dimensional_2006}
D.~Iftimie, M.~C. Lopes~Filho, and H.~J. Nussenzveig~Lopes.
\newblock Two-dimensional incompressible viscous flow around a small obstacle.
\newblock {\em Mathematische Annalen}, 336(2):449--489, 2006.

\bibitem{lacave_two-dimensional_2009}
C.~Lacave.
\newblock Two-dimensional incompressible viscous flow around a thin obstacle
  tending to a curve.
\newblock {\em Proceedings of the Royal Society of Edinburgh Section A:
  Mathematics}, 139(6):1237--1254, 2009.

\bibitem{lacave_small_2017}
C.~Lacave and T.~Takahashi.
\newblock Small {Moving} {Rigid} {Body} into a {Viscous} {Incompressible}
  {Fluid}.
\newblock {\em Archive for Rational Mechanics and Analysis}, 223(3):1307--1335,
  2017.

\bibitem{lions_quelques_1969}
J.-L. Lions.
\newblock {\em Quelques méthodes de résolution des problèmes aux limites non
  linéaires}.
\newblock Dunod, 1969.

\bibitem{san_martin_global_2002}
J.~San~Martín, V.~Starovoitov, and M.~Tucsnak.
\newblock Global {Weak} {Solutions} for the {Two}-{Dimensional} {Motion} of
  {Several} {Rigid} {Bodies} in an {Incompressible} {Viscous} {Fluid}.
\newblock {\em Archive for Rational Mechanics and Analysis}, 161(2):113--147,
  2002.

\bibitem{serre_chute_1987}
D.~Serre.
\newblock Chute {Libre} d'un {Solide} dans un {Fluide} {Visqueux}
  lncompressible. {Existence}.
\newblock {\em Japan Journal of Industrial and Applied Mathematics},
  4(1):99--110, 1987.

\bibitem{takahashi_global_2004}
T.~Takahashi and M.~Tucsnak.
\newblock Global strong solutions for the two-dimensional motion of an infinite
  cylinder in a viscous fluid.
\newblock {\em Journal of Mathematical Fluid Mechanics}, 6(1):53--77, 2004.

\bibitem{yudakov_solvability_1974}
N.~V. Yudakov.
\newblock The solvability of the problem of the motion of a rigid body in a
  viscous incompressible fluid.
\newblock {\em Dinamika Sploshnoi Sredy}, 18:249--253, 1974.

\end{thebibliography}
\end{document}